\newtheorem{proposition}{\sc Proposition}[section]
\newtheorem{lemma}[proposition]{\sc Lemma}
\newtheorem{corollary}[proposition]{\sc Corollary}
\theoremstyle{definition}
\newtheorem{definition}[proposition]{\sc Definition}
\theoremstyle{remark}
\newtheorem{remark}[proposition]{\sc Remark}
\newcommand{\can}{\operatorname{\it can}}
\newcommand{\id}{\operatorname{id}}
\def\sw#1{{\sb{(#1)}}}
\newcommand{\ot}{\otimes}
\renewcommand{\phi}{\varphi}
\renewcommand{\epsilon}{\varepsilon}
\renewcommand{\subset}{\subseteq}
\def\o{\sp{[1]}}
\def\t{\sp{[2]}}
\newcommand{\co}{\,\mathrm{co}\,}
\def\C{{\mathbb C}}
\def\N{{\mathbb N}}
\def\Z{{\mathbb Z}}
\def\id{{\rm id}}
\def\cO{\mathcal{O}}
\newcommand{\mZt}{\Z_2}
\newcommand{\Hom}{{\text{Hom}}}
\newcommand{\rp}{{\mathbb{R}\mathbb{P}}}
\begin{document}

\title{Quantum principal bundles over quantum real projective spaces}

\author{Tomasz Brzezi\'nski}
 \address{ Department of Mathematics, Swansea University, 
  Singleton Park,  Swansea SA2 8PP, U.K.} 
  \email{T.Brzezinski@swansea.ac.uk}  
\author{Bartosz Zieli\'nski}
\address{Instytut Matematyczny, Polska Akademia Nauk, ul.~\'Sniadeckich 8, Warszawa, 00-956 Poland. \newline\indent
Department of Theoretical Physics and Informatics, University of \L{}\'od\'z, Pomorska 149/153 90-236 \L{}\'od\'z, Poland.}
\email{bzielinski@uni.lodz.pl}

\begin{abstract}
Two hierarchies of quantum principal bundles over quantum real projective spaces are constructed. One hierarchy contains bundles with $U(1)$ as a structure group, the other has the quantum group $SU_q(2)$ as a fibre. Both hierarchies are obtained by the process of prolongation from bundles with the cyclic group of order 2 as a fibre. The triviality or otherwise of these bundles is determined by using a general criterion for a prolongation of a comodule algebra to be a cleft Hopf-Galois extension.
\end{abstract}
\maketitle
\setcounter{tocdepth}{1}
\tableofcontents

\section{Introduction}
For considerable time it has been  argued that faithfully flat Hopf-Galois extensions 
 or Hopf-Galois extensions  
 admitting  strong connections 
  or  {\em principal comodule algebras} 
  should be considered as objects representing principal bundles in noncommutative geometry; see e.g.\ \cite{Sch:pri}, \cite{BrzMaj:gau}, \cite{h-pm96}, \cite{hkmz}. Prolongation of the structure group to a larger group is one  of standard methods of constructing principal bundles in classical geometry. 
   In the case of principal comodule algebras an analogous  (or dual) construction  starts with a principal comodule algebra over a Hopf algebra $\bar{H}$ and produces a principal comodule algebra over any Hopf algebra $H$ for which there exists a Hopf algebra map $H\to \bar{H}$, by using the cotensor product; see \cite[Remark~3.11]{Sch:pri}. In this paper we derive necessary and sufficient conditions for a prolongation to be trivial, and study prologations of quantum (Euclidean) spheres.  

Although this text deals primarily with prolongations we also comment on the opposite process of {\em reduction}. This is the process by which a principal comodule $H$-algebra is reduced to a principal $\bar{H}$-comodule algebra along a surjective Hopf algebra map $H\to \bar{H}$. In particular we illustrate the possibility of reducing trivial (that is smash product) principal comodule algebras to non-trivial principal comodule algebras.

The paper is organised as follows. In Section~\ref{sec.pri} we gather preliminary information about principal comodule algebras and prolongations. In Section~\ref{sec.sph.proj} we present a hierarchy (or a sequence of surjective maps) of  coordinate algebras of quantum spheres. Each algebra in this hierarchy is a principal comodule algebra of the Hopf algebra $\mathcal{O}(\mZt)$ generated by a single self-adjoint  element that squares to 1. Invariant subalgebras can be identified with quantum real projective spaces.  We spend some time presenting the algebraic structure of low dimensional quantum spheres and of the coordinate algebra of   the quantum real projective plane $\cO(\rp_q^2)$. The main results of the paper are contained in Sections~\ref{sec.cleft} and \ref{sec.exam}. In the former, a sufficient and necessary condition for prolongation of a comodule algebra to be cleft is derived. In the latter we prolong quantum spheres to principal comodule algebras of $\cO(U(1))$ and $\cO(SU_q(2))$. We use criterion derived in Section~\ref{sec.cleft} to determine which of the constructed comodule algebras are trivial, and we describe algebraic contents of  algebras obtained as prolongations. In particular, we prove that  $\cO(U(1))$-prolongations of coordinate algebras of quantum spheres $\cO(S_q^m)$, $m>1$,  are non-trivial.  Furthermore,  we give the presentation by generators and relations of  $\cO(U(1))$-prolongations of even-dimensional spheres, describe their irreducible $*$-representations and construct Fredholm modules over them. We prove that $\cO(U(1))$-prolongations of odd-dimensional spheres are isomorphic as algebras (but not as comodules) to $\cO(S^{2n+1}_q)\otimes \cO(U(1))$.  
We describe 
the smash product structure of the $\cO(SU_q(2))$-prolongation of $\cO(S_q^2)$ and   explain that, for $m=2,3$,  the 
algebras $\cO(S^{m}_q)\square_{\mathcal{O}(\mZt)}\cO(U(1))$ are non-trivial principal  $\cO(U(1))$-comodule algebras obtained by reduction of  trivial principal $\cO(SU_q(2))$-comodule algebras.

\subsection*{Notation}
All algebras in this paper are over the field of complex numbers. They are associative, unital and  $*$-algebras (the latter minor assumption might be dropped and then the choice of the ground field may be freed). Hopf algebras are assumed to have bijective antipodes. 
The comultiplication in a Hopf algebra $H$ is denoted by $\Delta$, counit by $\varepsilon$ and the antipode by $S$. A subscript is used sometimes if more than one Hopf algebra appears. We use the standard Sweedler notation for comultiplication, $\Delta(h)=h\sw{1}\otimes h\sw{2}$ (summation implicit), for all $h\in H$. Whenever needed the coaction on a right $H$-comodule $M$ is denoted by $\rho^H$, and the Sweedler notation $\rho^H(m) = m\sw 0\otimes m\sw 1$ is used. The vector space of right $H$-colinear maps from $M$ to $N$ is denoted by $\Hom^H(M,N)$. By a (right) $H$-comodule algebra we mean an algebra and a right $H$-comodule with a coaction that is an algebra map. 
By $\sigma$ we denote the flip map between vector spaces, $\sigma: V\otimes W \to W\otimes V$, $v\otimes w \mapsto w\otimes v$.

\section{Principal comodule algebras}\label{sec.pri}
\setcounter{equation}{0}

Principal comodule algebras are simply the same as faithfully flat Hopf-Galois extensions (by a Hopf algebra with bijective antipode). The definition of a principal comodule algebra can equivalently be formulated in terms of {\em strong connections}. 

\begin{definition}[see e.g.\ \cite{bh04}, \cite{hkmz}]
Let $H$ be a Hopf algebra with bijective antipode, and let $A$ be a right $H$-comodule algebra with coaction $\rho^H$, multiplication  $\mu:A\otimes A\to A$ and unit $\eta:\C\to A$.
A map 
$$\ell:H\longrightarrow A\otimes A$$
is called a {\em strong connection} if
\begin{subequations}
\label{strong}
\begin{gather}
\ell(1)=1\otimes 1, \label{strong1}\\
\mu\circ \ell = \eta \circ \varepsilon, \label{strong2}\\
 (\ell\otimes\id)\circ\Delta  = (\id\otimes \rho^H)\circ \ell , \label{strong3}\\
(S\otimes \ell)\circ\Delta = (\sigma\otimes \id)\circ (\rho^H\otimes \id)\circ \ell .  \label{strong4}
\end{gather}
\end{subequations}
If an $H$-comodule algebra $A$ admits a strong connection, then it is called a {\em principal $H$-comodule algebra}.
\end{definition}

\begin{remark}\label{rem.trans}
Let us comment how the existence of a strong connection is related to the Hopf-Galois condition. 
Denote by 
$$
B=A^{\co H}:= \{ b\in A\; |\; \rho^H(b) = b\otimes 1\}
$$ 
the subalgebra of $H$-coaction invariants or $H$-coinvariants.
Consider the map 
$$
\can:A\otimes_B A\rightarrow A\otimes H,\quad p\otimes q\longmapsto pq\sw{0}\otimes q\sw{1},
$$
called the {\em canonical map}. $A$ is called a {\em Hopf-Galois extension} of $B$, provided the canonical map is bijective. Denote by $\psi:A\otimes A\rightarrow A\otimes_BA$ the natural surjection.
If $A$ is a principal extension, then the map $\can$ is invertible, and the inverse can be written explicitly as
$$
\can^{-1}:p\otimes h\longmapsto p\psi(\ell(h)).
$$
Hence $\psi(\ell(h))=\can^{-1}(1\otimes h)$. While the strong connection is non-unique, its projection
on the tensor product over $B$, $\psi\circ\ell$ (called the {\em  translation map}), is. \end{remark}

Geometrically, one should understand principal comodule algebras as coordinate algebras of  quantum principal bundles. From this point of view $H$ is the algebra of functions on the fibre (structure quantum group) and the algebra of functions on the base is identified with the coaction invariant subalgebra. This intuitive point of view can also be argued categorically using synthetic approach to noncommutative geometry \cite{Brz:syn}.

{\em Cleft extensions} are examples of principal comodule algebras. These are principal comodule algebras for which there exists a  right $H$-colinear map 
$j: H\to A$ that is unital, i.e.\ $j(1)=1$ and convolution invertible, i.e.\ there exists a map $j^{-1}: H\to A$ such that, for all $h\in H$,
$$
j(h\sw 1)j^{-1}(h\sw 2) =  j^{-1}(h\sw 1)j (h\sw 2) = \varepsilon(h)1.
$$
The map $j$ is referred to as a {\em cleaving map}. In this case a strong connection can be defined as the composite 
\begin{equation}\label{lj}
\ell = (j^{-1}\otimes j)\circ \Delta. 
\end{equation}
Equivalently, cleft extensions can be characterised as those principal $H$-comodule algebras that are isomorphic to $A^{coH}\otimes H$ as left $A^{coH}$-modules and right $H$-comodules (the right $H$-coaction on $A^{coH}\otimes H$ is $\id\otimes \Delta)$. The isomorphism $\theta : A \to A^{coH}\otimes H$, $a\mapsto a\sw 0 j^{-1}(a\sw 1)\otimes h$ induces a twisted tensor product or a crossed product with invertible cocycle algebra structure on $A^{coH}\otimes H$ \cite{DoiTak:cle}. 

Let $A$ be a right $H$-comodule algebra. If there exists a right $H$-colinear algebra map $j: H\to A$, then $j$ is convolution invertible with $j^{-1} = j\circ S$, so $A$ is a cleft principal comodule algebra. In this case, the algebra $A$ is isomorphic to  the smash product of $A^{coH}$ with $H$ (a crossed product with a trivial cocycle) \cite{DoiTak:equ}, and it has a geometric meaning of a trivial quantum principal bundle. Thus we refer to such principal comodule algebras as  {\em trivial principal comodule algebras}.

The main construction used in this paper is given in the following

\begin{lemma}\label{strcot} 
Let $\bar{A}$ be a principal $\bar{H}$-comodule algebra, with a strong connection
$\ell$. Denote 
by $B=\bar{A}^{\co \bar{H}}$  a subalgebra of
$\bar{H}$-coaction invariant elements. Let $\pi:H\rightarrow \bar{H}$ be a Hopf algebra map and consider the cotensor product
$$
\bar{A}\square_{\bar{H}}H := \{\sum_i a^i\otimes h^i \in A\ot H \; |\; \sum_i a^i\sw 0 \otimes a^i\sw 1 \otimes h^i = \sum_i a^i \otimes \pi(h^i\sw 1) \ot h^i\sw 2\}.
$$
View $\bar{A}\square_{\bar{H}}H$ as a right $H$-comodule subalgebra of the tensor algebra $A\otimes H$ with the coaction $\id\otimes \Delta_H$. Then  $\bar{A}\square_{\bar{H}}H$ is a principal $H$-comodule algebra, with
a strong connection 
defined as the composite
$$
\left( \sigma\otimes\id \otimes\id\right)\circ \left(S\otimes \left(\ell\circ\pi\right)\otimes \id\right)\circ (\id\otimes \Delta_H)\circ\Delta_H: H \longrightarrow (\bar{A}\square_{\bar{H}}H)\otimes(\bar{A}\square_{\bar{H}}H).
$$
\end{lemma}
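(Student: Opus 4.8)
The plan is to denote the candidate map by $L$ and to begin by unwinding the composite. Tracking $h$ through the four maps, with the iterated coproduct $h\sw 1\ot h\sw 2\ot h\sw 3$, gives
$$
L(h) = \big(\ea{\pi(h\sw 2)}\ot S(h\sw 1)\big)\ot\big(\eb{\pi(h\sw 2)}\ot h\sw 3\big),
$$
so that $L$ has \emph{first leg} $\ea{\pi(h\sw 2)}\ot S(h\sw 1)$ and \emph{second leg} $\eb{\pi(h\sw 2)}\ot h\sw 3$, each a priori only in $\bar A\ot H$. I take for granted the routine facts that $\bar A\square_{\bar H}H$ is a subalgebra of $\bar A\ot H$ (both $\rho^{\bar H}\ot\id$ and $(\pi\ot\id)\circ\Delta_H$ are algebra maps) and is stable under $\id\ot\Delta_H$, so that it is an $H$-comodule algebra. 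It then remains to prove two things: (i) that $L$ actually lands in $(\bar A\square_{\bar H}H)\ot(\bar A\square_{\bar H}H)$, and (ii) that $L$ satisfies the four defining conditions \eqref{strong1}--\eqref{strong4}.

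I expect (i) to be the main obstacle; it is where the colinearity conditions \eqref{strong3}--\eqref{strong4} for $\ell$ are consumed. Since $\pi$ is a Hopf algebra map, $\Delta_{\bar H}\circ\pi = (\pi\ot\pi)\circ\Delta_H$ and $S_{\bar H}\circ\pi = \pi\circ S_H$; feeding $\pi(x)$ into \eqref{strong3} and \eqref{strong4} for $\ell$ therefore yields the transported identities
$$
\ea{\pi(x\sw 1)}\ot\eb{\pi(x\sw 1)}\ot\pi(x\sw 2) = \ea{\pi(x)}\ot(\eb{\pi(x)})\sw 0\ot(\eb{\pi(x)})\sw 1,
$$
$$
\pi(S(x\sw 1))\ot\ea{\pi(x\sw 2)}\ot\eb{\pi(x\sw 2)} = (\ea{\pi(x)})\sw 1\ot(\ea{\pi(x)})\sw 0\ot\eb{\pi(x)}.
$$
To place the second leg in $\bar A\square_{\bar H}H$ I apply the first identity with $x=h\sw 2$ (keeping the spectator $\ea{\pi(h\sw 2)}\ot S(h\sw 1)$ fixed): splitting the middle factor of the iterated coproduct turns $\rho^{\bar H}(\eb{\pi(h\sw 2)})$ into $\eb{\pi(h\sw 2)}\ot\pi(h\sw 3)$ after re-indexing, which is exactly the cotensor condition. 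Symmetrically, the first leg is handled by the second identity with $x=h\sw 2$, matching $\rho^{\bar H}(\ea{\pi(h\sw 2)})$ against $\ea{\pi(h\sw 2)}\ot\pi((S(h\sw 1))\sw 1)\ot(S(h\sw 1))\sw 2$ once one uses $\Delta_H(S(h\sw 1)) = S(h\sw 2)\ot S(h\sw 1)$. Both checks are pure Sweedler bookkeeping, but they must be carried out with the spectator leg held fixed, and keeping the coproduct indices straight is the delicate part.

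For (ii), condition \eqref{strong1} is immediate from $\ell(1)=1\ot 1$, $\pi(1)=1$ and $S(1)=1$. For \eqref{strong2} I use that multiplication in $\bar A\square_{\bar H}H\subset\bar A\ot H$ is componentwise, so $\mu(L(h)) = \ea{\pi(h\sw 2)}\eb{\pi(h\sw 2)}\ot S(h\sw 1)h\sw 3$; then \eqref{strong2} for $\ell$ together with $\counit\circ\pi=\counit$ collapses the first factor to $\counit(h\sw 2)1$, and the antipode axiom $S(h\sw 1)h\sw 2=\counit(h)1$ collapses the rest. Finally \eqref{strong3} and \eqref{strong4} for $L$ hold by the very design of the formula: applying $(L\ot\id)\Delta_H$ versus $(\id\ot\rho^H)L$, and $(S\ot L)\Delta_H$ versus $(\sigma\ot\id)(\rho^H\ot\id)L$, both reduce --- using only coassociativity, $\Delta_H(S(h\sw 1))=S(h\sw 2)\ot S(h\sw 1)$, and $\rho^H=\id\ot\Delta_H$ --- to the single tensor $S(h\sw 1)\ot\ea{\pi(h\sw 3)}\ot S(h\sw 2)\ot\eb{\pi(h\sw 3)}\ot h\sw 4$ (respectively its analogue for \eqref{strong3}), so that no further property of $\ell$ is needed. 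Thus the only genuinely substantial input is step (i).
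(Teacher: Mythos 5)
Your argument is correct. The paper states Lemma~\ref{strcot} without any proof (the prolongation construction is imported from the literature, cf.\ \cite[Remark~3.11]{Sch:pri}), so there is no in-text argument to compare against; your direct verification --- unwinding the composite to $\ea{\pi(h\sw 2)}\ot S(h\sw 1)\ot\eb{\pi(h\sw 2)}\ot h\sw 3$, using \eqref{strong3} and \eqref{strong4} for $\ell$ transported along the Hopf algebra map $\pi$ to check that both legs satisfy the cotensor condition, and then verifying \eqref{strong1}--\eqref{strong4} by Sweedler bookkeeping --- is the standard one and supplies exactly the details the paper omits. One minor point of wording: in step (i) the factor $\ea{\pi(h\sw 2)}$ is not literally a spectator when you invoke the transported identities, since both legs of $\ell$ are transformed together; it only reappears unchanged after relabelling the iterated coproduct, which is what makes the computation close, so this is a matter of phrasing rather than a gap.
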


The principal $H$-comodule algebra $A$ constructed in Lemma~\ref{strcot} is known as a {\em prolongation} of $\bar{A}$. Obviously, one can talk of prolongations also in the case of comodule algebras (not necessarily principal). There is no guarantee, however, that the prolonged $H$-comodule algebra $\bar{A}\square_{\bar{H}}H$ be principal. One particular case in which $\bar{A}\square_{\bar{H}}H$ is principal even though $\bar{A}$ is not necessarily so is discussed in Section~\ref{sec.cleft}. In general, by the standard coalgebra-theoretic arguments (the Hom-cotensor relations),
\begin{equation}\label{coinv.prol}
({\bar{A}\square_{\bar{H}}H})^{co {H}} \simeq \Hom^{{H}}(\C, \bar{A}\square_{\bar{H}}H) \simeq \Hom^{\bar{H}}(\C, \bar{A}) \simeq \bar{A}^{co \bar{H}} ,
\end{equation}
so the coaction-invariant subalgebras of both original and prolonged comodule algebras are isomorphic to each other.

The process opposite to prolongation is known as  {\em reduction}. The main result in this area is the Hopf-Galois Reduction Theorem; see \cite{g-r99}, \cite{s-p99} and \cite{qsng}. In this text 
we use one lemma, which can be viewed as a particular corollary of the Hopf-Galois Reduction Theorem.

\begin{lemma}
\label{hogare}
Let $\pi:H\rightarrow \bar{H}$ be a surjective Hopf algebra map (so that $H$ is a left $\bar{H}$-comodule algebra with coaction $(\pi\otimes \id)\circ \Delta_H$) such that $H$ is a left principal $\bar{H}$-comodule algebra. 
Let $\bar{A}$ be a right $\bar{H}$-comodule algebra.  
If $\bar{A}\square_{\bar{H}}H$ is a principal $H$-comodule  algebra, then $\bar{A}$ is a principal $\bar{H}$-comodule algebra.
 \end{lemma}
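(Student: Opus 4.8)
The plan is to exploit the characterisation of principal comodule algebras as faithfully flat Hopf--Galois extensions and to descend the relevant structure from $E:=\bar{A}\square_{\bar{H}}H$ to $\bar{A}$ along $\pi$. Write $B:=\bar{A}^{\co\bar{H}}$; by \eqref{coinv.prol} the coinvariants of $E$ coincide with $B$, so $E$ and $\bar{A}$ are extensions of the \emph{same} base. The basic bridge is the map $q:=\id\ot\varepsilon_H\colon E\to\bar{A}$, $\sum_i a^i\ot h^i\mapsto\sum_i a^i\varepsilon_H(h^i)$. Equipping $E$ with the right $\bar{H}$-coaction $(\id\ot\pi)\circ(\id\ot\Delta_H)$ induced from its $H$-coaction, a direct check on the cotensor relation shows that $q$ is a unital algebra map and a morphism of right $\bar{H}$-comodules restricting to the identity on $B$.

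Next I would transport a connection. Since $H$ is left principal for $\lambda:=(\pi\ot\id)\circ\Delta_H$, it admits a left strong connection $\ell^{\mathrm{L}}\colon\bar{H}\to H\ot H$; setting $\gamma:=(\id\ot\varepsilon_H)\circ\ell^{\mathrm{L}}$ yields a unital, counital linear section of $\pi$ that is left $\bar H$-colinear, $\lambda\circ\gamma=(\id\ot\gamma)\circ\Delta_{\bar H}$, while $\gamma':=S_H\circ\gamma\circ S_{\bar H}^{-1}$ (using bijectivity of the antipodes) is a section carrying the complementary, right colinearity $(\id\ot\pi)\Delta_H\circ\gamma'=(\gamma'\ot\id)\Delta_{\bar H}$. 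Writing $L\colon H\to E\ot E$ for the strong connection of $E$ and $F:=(q\ot q)\circ L$, one checks from the colinearity of $q$ and axioms \eqref{strong3}--\eqref{strong4} for $L$ that $F(1)=1\ot1$, $\mu_{\bar A}\circ F=\eta_{\bar A}\circ\varepsilon_H$, together with $\pi$-twisted versions of \eqref{strong3}--\eqref{strong4}. Consequently $\ell':=F\circ\gamma'$ satisfies \eqref{strong1}, \eqref{strong2} and the comodule axiom \eqref{strong3}, whereas $\ell:=F\circ\gamma$ satisfies \eqref{strong1}, \eqref{strong2} and \eqref{strong4}.

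By Remark~\ref{rem.trans} the map $\ell'$ already furnishes a right inverse of $\can_{\bar A}\colon\bar A\ot_B\bar A\to\bar A\ot\bar H$ (hence its surjectivity) and exhibits $\bar A$ as $\bar H$-equivariantly projective over $B$, while $\ell$ controls the left-handed translation identity governing injectivity of $\can_{\bar A}$. The main obstacle is exactly that no single section can be simultaneously left- and right-$\bar H$-colinear, so the two transports deliver the two comodule axioms only separately, and reconciling them into genuine bijectivity of $\can_{\bar A}$ (equivalently, into a full two-sided strong connection) is where the hypothesis must be used in earnest. At this step I would invoke faithful-flatness descent: as $H$ is principal, hence faithfully flat, over its $\bar H$-coinvariants, the Hopf--Galois Reduction Theorem propagates the bijectivity and faithful flatness holding upstairs for $E$ down to $\bar A$ along $\pi$; faithful flatness together with bijectivity of $\can_{\bar A}$ then makes $\bar A$ a faithfully flat Hopf--Galois, i.e.\ principal, $\bar H$-comodule algebra. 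I expect this final descent---upgrading the one-sided data to full bijectivity of the canonical map---to be the crux of the argument.
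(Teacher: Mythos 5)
The paper does not actually prove this lemma: it is stated as ``a particular corollary of the Hopf--Galois Reduction Theorem'' with references to \cite{g-r99}, \cite{s-p99} and \cite{qsng}, and no independent argument is given. Your proof, after a substantial constructive preamble, ends by invoking exactly that theorem (``the Hopf--Galois Reduction Theorem propagates the bijectivity and faithful flatness holding upstairs \ldots down to $\bar{A}$''), so the load-bearing step is identical to the paper's one-line justification, and everything preceding it could be deleted without loss. The preamble is mostly sound as far as it goes: $q=\id\ot\varepsilon_H$ is indeed a unital, right $\bar{H}$-colinear algebra map $\bar{A}\square_{\bar{H}}H\to\bar{A}$, the coinvariants agree by \eqref{coinv.prol}, the sections $\gamma$ and $\gamma'=S_H\circ\gamma\circ S_{\bar{H}}^{-1}$ carry the one-sided colinearities you claim, and $F\circ\gamma'$ does satisfy \eqref{strong1}, \eqref{strong2} and \eqref{strong3}; you also correctly identify that the obstruction is the absence of a \emph{bi}colinear section of $\pi$ --- which is precisely the extra hypothesis under which Corollary~\ref{strpropp} succeeds in writing down a genuine strong connection on $\bar{A}$. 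Two caveats, however. First, the assertion that $\ell'$ ``exhibits $\bar{A}$ as $\bar{H}$-equivariantly projective over $B$'' is unjustified: equivariant projectivity requires the left colinearity \eqref{strong4}, which $\ell'$ lacks, so $\ell'$ only delivers surjectivity of the canonical map. Second, since the concluding appeal subsumes the whole lemma, your write-up does not constitute an independent proof; the genuine gap --- reconciling the two one-sided maps $F\circ\gamma$ and $F\circ\gamma'$ into a single two-sided strong connection, or otherwise establishing bijectivity of the canonical map together with faithful flatness for $\bar{A}$ --- remains exactly where you flag it, and is discharged only by the cited external theorem, just as in the paper.
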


\section{Quantum spheres and real projective spaces}\label{sec.sph.proj}
\setcounter{equation}{0}
The noncommutative or quantum (Euclidean) spheres were introduced in \cite{P-P87} (in dimension 2) and in \cite{VakSob:alg}, \cite{FRT} (for all $n$). Let  $q$ be a real number, $0< q<1$. 
The coordinate algebra $\cO(S^{2n+1}_q)$ of  the odd-dimensional quantum sphere is the unital complex $*$-algebra with generators $z_0,z_1,\ldots ,z_n$ subject to the following relations:

\begin{subequations}
\label{sph}
\begin{gather}
\label{sph1}
z_iz_j = qz_jz_i \quad \mbox{for $i<j$}, \qquad z_iz^*_j = qz_j^*z_i \quad \mbox{for $i\neq j$},\\
\label{sph2}
z_iz_i^* = z_i^*z_i + (q^{-2}-1)\sum_{m=i+1}^n z_mz_m^*, \qquad \sum_{m=0}^n z_mz_m^*=1.
\end{gather}
\end{subequations}
The coordinate algebra $\cO(S^{2n}_q)$ of  the even-dimensional quantum sphere is the unital complex $*$-algebra with generators $z_0,z_1,\ldots ,z_n$ and relations \eqref{sph} supplemented with $z_n^*=z_n$.\footnote{We have learnt of the possibility of presenting even and odd dimensional quantum spheres is a uniform way from \cite{HawLan:fred}.}  All these quantum spheres are right comodule algebras over the Hopf algebra $\mathcal{O}(\mZt)$ generated by a self-adjoint  grouplike element $u$ satisfying $u^2=1$ (thus $u$ is also unitary). The coaction is defined on generators by
\begin{equation}\label{z2coact}
z_i \longmapsto z_i\otimes u.
\end{equation}
The coordinate algebra of the quantum real projective space
$\cO(\rp_q^m)$ is defined as the $\mathcal{O}(\mZt)$-coaction invariant subalgebra of 
$\cO(S^{m}_q)$.\footnote{In \cite{HonSzy:sph}  the $C^*$-algebras of continuous functions on $\rp_q^m$  were defined and shown to be isomorphic to Cuntz-Krieger algebras associated to suitable directed graphs. By this means Hong and Szyma\'nski were also able to calculate the $K$-theory of quantum real projective spaces.} 
Thanks to the second of relations \eqref{sph2} (the radius relation), every quantum sphere  $\cO(S^{2n}_q)$, $\cO(S^{2n+1}_q)$   admits a strong connection
\begin{equation}\label{strong.sph}
\ell(u) = \sum_{i=0}^n z_i \otimes z_i^*.
\end{equation}
Thus each of the quantum spheres  $\cO(S^{m}_q)$ is a principal $\mathcal{O}(\mZt)$-comodule algebra or a quantum principal bundle over the quantum real projective space $\rp_q^m$. In the case $m=2$ this  was  proven in \cite{h-pm96}, where also the algebra $\cO(\rp_q^2)$ was defined.

Quantum spheres form a hierarchy of right $\mathcal{O}(\mZt)$-comodule $*$-algebras
\begin{equation}\label{hier}
\xymatrix{
\cdots \ar[r]^-{f_5} & \cO(S^{5}_q) \ar[r]^-{f_4} & \cO(S^{4}_q) \ar[r]^-{f_{3}} & \cO(S^{3}_q) \ar[r]^-{f_{2}}&  \cO(S^{2}_q) \ar[r]^-{f_{1}} & \cO(S_q^{1}),}
\end{equation}
where each of the $f_m$ is a surjective $*$-algebra and  right $\mathcal{O}(\mZt)$-colinear map defined on generators as follows. In the odd case
\begin{equation}\label{odd}
f_{2n-1}: \cO(S^{2n}_q)\longrightarrow \cO(S^{2n-1}_q), \qquad z_i\longmapsto  \left\{ \begin{array}{ll}
 z_i & \mbox{if $i\neq n$} \\
0 & \mbox{if $i= n$} 
\end{array}
\right.
\end{equation}
In the even case
\begin{equation}\label{even}
f_{2n}: \cO(S^{2n+1}_q)\longrightarrow \cO(S^{2n}_q), \qquad z_i\longmapsto z_i.
\end{equation}
Note that in this case both $z_n$ and $z_n^*$ are mapped to the same self-adjoint element, which is consistent with the algebraic relations, since $n$ is the maximal number in the set indexing generators of  $ \cO(S^{2n+1}_q)$ (so that \eqref{sph1} are preserved) and $z_n$ is normal by \eqref{sph2}.

The last three members of hierarchy \eqref{hier} are of particular interest, so some comments on them are now in order. In the lowest dimensional case, the relations \eqref{sph} do not depend on the parameter $q$. The algebra $\cO(S_q^{1})$ is a commutative polynomial algebra generated by a unitary element, say $v$, hence it can be identified with the algebra of polynomials on the circle, $\cO(S^{1})$, or the algebra of polynomials on the group $U(1)$, $\cO(U(1))$. $\cO(S^{1})$ is a Hopf algebra, $v$ is a grouplike element and there is an (obvious) Hopf algebra map 
\begin{equation}\label{pi0}
\pi_2: \cO(U(1))=\cO(S^{1})\longrightarrow \mathcal{O}(\mZt), \qquad v\longmapsto u.
\end{equation}
 From now on we use $u$ to denote the unitary (and self-adjoint) generator of $\mathcal{O}(\mZt)$ and we use $v$ to denote the unitary generator of $\cO(S^{1})$.

The one before the penultimate member of the hierarchy, $\cO(S_q^{3})$, is the coordinate algebra of  the quantum group $SU_q(2)$; see \cite{w-sl87}. In terms of generators $a:=z_0$ and $b:= z_1^*$ the relations \eqref{sph} come out as:
$$
ab=qba,\quad ab^*=qb^*a,\quad bb^*=b^*b,\quad aa^*+bb^*=1,\quad a^*a+q^{-2}bb^*=1.
$$
The Hopf algebra structure is given by the matrix co-representation:
\begin{gather*}
\Delta:\left(\begin{matrix}
a & b\\
-q^{-1}b^* & a^*
\end{matrix}\right)\longmapsto \left(\begin{matrix}
a\otimes 1 & b\otimes 1\\
-q^{-1}b^*\otimes 1 & a^*\otimes 1
\end{matrix}\right) \left(\begin{matrix}
1\otimes a & 1\otimes b\\
1\otimes -q^{-1}b^* & 1\otimes a^*
\end{matrix}\right),\nonumber\\
S:\left(\begin{matrix}
a & b\\
-q^{-1}b^* & a^*
\end{matrix}\right)\longmapsto
\left(\begin{matrix}
a^* & -q^{-1}b\\
b^* & a
\end{matrix}\right),\quad
\varepsilon:\left(\begin{matrix}
a & b\\
-q^{-1}b^* & a^*
\end{matrix}\right)\longmapsto
\left(\begin{matrix}
1 & 0\\
0 & 1
\end{matrix}\right).
\end{gather*}
The composite $f_1\circ f_2: \cO(SU_q(2)) \to \cO(U(1))$ is given on generators by $a\mapsto v$, $b\mapsto 0$ and turns out to be a Hopf algebra map. Consequently, the composite 
\begin{equation}\label{pi} 
\pi = \pi_2\circ f_1\circ f_2 : \cO(SU_q(2)) \longrightarrow \mathcal{O}(\mZt), \qquad a\longmapsto u, \quad b\longmapsto 0,
\end{equation}
 is a Hopf algebra map. Since $\pi$ is a Hopf algebra map it makes $\cO(SU_q(2))$ into a (cocentral) $\mathcal{O}(\mZt)$-bicomodule by composing $\pi$ with the comultiplication. The right coaction coincides with the coaction \eqref{z2coact}. The map
\begin{equation}\label{iota}
\imath : \mathcal{O}(\mZt) \longrightarrow  \cO(SU_q(2)), \qquad 1\longmapsto 1, \quad u\longmapsto a,
\end{equation}
is a bicolinear splitting of $\pi$.

The penultimate member of the hierarchy $\cO(S^{2}_q)$ is the coordinate algebra of  the quantum equatorial Podle\'s sphere $S^2_{\sqrt{q},\infty}$  (note the square root in the parameter!) \cite{P-P87}. The map  $i: \cO(S^{2}_q)\to \cO(SU_{\sqrt{q}}(2))$,  $z_0\mapsto -q^{-1}ab$, $z_1\mapsto bb^*$,  is a $*$-algebra inclusion. Furthermore, $i( \cO(S^{2}_q))$ is a left coideal of the Hopf algebra  $\cO(SU_{\sqrt{q}}(2))$, i.e.\ the quantum 2-sphere $S^2_q$ is a quantum homogeneous space of the quantum group $SU_{\sqrt{q}}(2)$.

The algebra of functions on the quantum projective space $\cO(\rp_q^2)$ is generated by 
$$
P=q^{-2} z_1^2,\quad R=z_0^2,\quad T=q^{-1}z_1z_0.
$$
Generators $P$ and $R$ satisfy the following relations \cite{hms03p}:
\begin{gather*}
P=P^*,\quad T^2=qPR,\quad RT^*=qT(-q^2P+1),\quad R^*T=q^{-1}T^*(-P+1),\nonumber\\
RR^*=q^{6}P^2-q^2(1+q^2)P+1,\quad R^*R=q^{-2}P^2-(1+q^{-2})P+1,\nonumber\\
TT^*=-q^2P^2+P,\quad T^*T=q^{-2}(P-P^2),\nonumber\\
RP=q^4PR,\quad RT=q^2TR,\quad PT=q^{-2}TP.
\end{gather*}
Equivalently, the coordinate algebra of the quantum projective space $\cO(\rp_q^2)$ is a $*$-algebra generated by $P$, $R$ and $T$ satisfying  above relations  (note that the squaring of $q$ is needed to  synchronise our conventions with those of \cite{hms03p}).

\section{Cleft prolongations of comodule algebras}\label{sec.cleft}
\setcounter{equation}{0}
The aim of this section is to determine necessary and sufficient conditions for a prolongation of a right comodule algebra to be cleft. Throughout this section $\pi:H\rightarrow \bar{H}$ is a Hopf algebra map and $H$ is understood as an $\bar{H}$-bicomodule with coactions $(\pi\otimes \id)\circ \Delta_H$, $(\id\otimes \pi)\circ \Delta_H$. Furthermore, $\bar{A}$ is a right $\bar{H}$-comodule algebra and 
 $B=\bar{A}^{\co\bar{H}}$. 

\begin{proposition}\label{cleftprop}
The prolongation $\bar{A}\square_{\bar{H}}H$ is a cleft extension of $B$ if and only if there
exists a right $\bar{H}$-colinear, unital, convolution invertible map $f:H\rightarrow \bar{A}$.
\end{proposition}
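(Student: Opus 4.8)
The statement is an equivalence, so I would prove the two implications separately, in both cases passing between maps $H\to\bar A$ and maps $H\to\bar A\square_{\bar H}H$ through the two obvious natural transformations: the map $(f\ot\id)\circ\comul_H$ in one direction and the projection $\id\ot\varepsilon$ in the other. The guiding principle throughout is that $\bar A\square_{\bar H}H$ is a subalgebra of $\bar A\ot H$ with componentwise product and coaction $\id\ot\comul_H$, so all convolution computations can be performed in $\Hom(H,\bar A\ot H)$ and then restricted, provided one checks that the maps produced actually take values in the cotensor product.

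For the implication ``$\Leftarrow$'', given a right $\bar H$-colinear, unital, convolution invertible $f\colon H\to\bar A$, I would take as candidate cleaving map $j\colon H\to\bar A\ot H$, $j(h)=f(h\sw1)\ot h\sw2$, that is $j=(f\ot\id)\circ\comul_H$. A direct application of the colinearity $\rho^{\bar H}(f(h))=f(h\sw1)\ot\pi(h\sw2)$ shows that each $j(h)$ satisfies the defining condition of $\bar A\square_{\bar H}H$, coassociativity of $\comul_H$ gives right $H$-colinearity of $j$, and $j(1)=1\ot1$ is immediate. The one substantial point is convolution invertibility: writing $j=p*q$ with $p(h)=f(h)\ot1$ and $q(h)=1\ot h$, one has $p^{-1}(h)=f^{-1}(h)\ot1$ and $q^{-1}(h)=1\ot S(h)$, so $j$ is convolution invertible in $\Hom(H,\bar A\ot H)$ with $j^{-1}(h)=f^{-1}(h\sw2)\ot S(h\sw1)$.

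The main obstacle is to verify that this $j^{-1}$ again takes values in $\bar A\square_{\bar H}H$, since convolution inverses need not respect a subalgebra. Unwinding the defining condition shows that membership of $j^{-1}(h)$ in the cotensor product is equivalent to the ``twisted colinearity'' identity
\begin{equation*}
\rho^{\bar H}\bigl(f^{-1}(h)\bigr)=f^{-1}(h\sw2)\ot\pi\bigl(S(h\sw1)\bigr).
\end{equation*}
I would establish this as follows. Since $\rho^{\bar H}$ is an algebra map, postcomposition $\rho^{\bar H}\circ(-)$ is a homomorphism of convolution algebras $\Hom(H,\bar A)\to\Hom(H,\bar A\ot\bar H)$, whence $\rho^{\bar H}\circ f^{-1}=(\rho^{\bar H}\circ f)^{-1}$. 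Colinearity of $f$ reads $\rho^{\bar H}\circ f=(f\ot\id)\circ\rho^H$ with $\rho^H(h)=h\sw1\ot\pi(h\sw2)$, and a short computation using the antipode axiom $h\sw1 S(h\sw2)=\varepsilon(h)1$ verifies that $h\mapsto f^{-1}(h\sw2)\ot\pi(S(h\sw1))$ is a right, hence two-sided, convolution inverse of $(f\ot\id)\circ\rho^H$; this gives the displayed identity. With $j$ and $j^{-1}$ both valued in $\bar A\square_{\bar H}H$, the map $j$ is a cleaving map, and since a cleaving map yields a strong connection via \eqref{lj}, the prolongation is a cleft extension of $B=(\bar A\square_{\bar H}H)^{\co H}\simeq\bar A^{\co\bar H}$ by \eqref{coinv.prol}.

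For the converse ``$\Rightarrow$'', starting from a cleaving map $j\colon H\to\bar A\square_{\bar H}H$, I would set $f=(\id\ot\varepsilon)\circ j\colon H\to\bar A$. Unitality of $f$ is immediate. Applying $\id\ot\id\ot\varepsilon$ to the defining condition of the cotensor product evaluated on $j(h)$ gives $\rho^{\bar H}(f(h))=(\id\ot\pi)\bigl(j(h)\bigr)$, while applying $\id\ot\varepsilon\ot\pi$ to the right $H$-colinearity of $j$ gives $(\id\ot\pi)(j(h))=f(h\sw1)\ot\pi(h\sw2)$; together these prove that $f$ is right $\bar H$-colinear. Finally, $\id\ot\varepsilon\colon\bar A\ot H\to\bar A$ is an algebra map, so postcomposition is a convolution-algebra homomorphism and $f^{-1}:=(\id\ot\varepsilon)\circ j^{-1}$ is a convolution inverse of $f$. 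This produces the required map and completes the equivalence.
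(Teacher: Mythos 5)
Your proof is correct, and while the converse direction coincides with the paper's argument (take $f=(\id\ot\varepsilon_H)\circ j$, note that postcomposition with the algebra map $\id\ot\varepsilon_H$ preserves convolution inverses), the forward direction takes a genuinely different, though closely related, route. The paper does not construct the cleaving map directly: it builds the normal-basis isomorphism $\theta:\bar{A}\square_{\bar{H}}H\to B\ot H$, $a\ot h\mapsto af^{-1}(h\sw 1)\ot h\sw 2$, with inverse $b\ot h\mapsto bf(h\sw 1)\ot h\sw 2$, and then appeals to the characterisation of cleft extensions as principal comodule algebras satisfying the normal basis property. You instead exhibit the cleaving map $j=(f\ot\id)\circ\Delta_H$ itself together with its convolution inverse $j^{-1}(h)=f^{-1}(h\sw 2)\ot S(h\sw 1)$; note that $j=\theta^{-1}(1\ot-)$, so the two constructions carry the same information. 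Both arguments hinge on the same key identity --- the twisted colinearity $\rho^{\bar H}(f^{-1}(h))=f^{-1}(h\sw 2)\ot S(\pi(h\sw 1))$, which is \eqref{invcov} in the paper: there it guarantees that $\theta$ lands in $B\ot H$, for you it guarantees that $j^{-1}$ lands in the cotensor product (correctly identified as the one non-routine point, since convolution inverses need not respect a subalgebra). Your derivation of this identity, via the observation that $\rho^{\bar H}\circ(-)$ is a homomorphism of convolution algebras so that $\rho^{\bar H}\circ f^{-1}=(\rho^{\bar H}\circ f)^{-1}$, is cleaner and less computational than the paper's four-line Sweedler manipulation, and the factorisation $j=p*q$ is a tidy way to get invertibility in $\Hom(H,\bar A\ot H)$. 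A further small advantage of your route is that it yields cleftness (and, via \eqref{lj}, principality) directly from the definition, whereas the paper's appeal to the normal-basis characterisation strictly speaking also presupposes that the extension is principal.
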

\begin{proof}
Given $f$ define $\theta:\bar{A}\square_{\bar{H}}H\rightarrow B\otimes H$ by
$\theta(a\otimes h)=af^{-1}(h\sw{1})\otimes h\sw{2}$. Obviously $\theta$ is a left $B$-module map and a right $H$-comodule map.
The convolution inverse $f^{-1}$ satisfies the 
following covariance property:
\begin{equation}\label{invcov}
f^{-1}(h\sw{2})\otimes S\left(\pi(h\sw{1})\right)=f^{-1}(h)\sw{0}\otimes f^{-1}(h)\sw{1}.
\end{equation}
This can be easily proven by standard Hopf algebraic techniques as follows.
Consider the identity:
\begin{equation*}
f^{-1}(h\sw{1})f(h\sw{2})\otimes S\left(\pi(h\sw{3})\right)\otimes f^{-1}(h\sw{4})=1\otimes S\left(\pi(h\sw{1})\right)\otimes f^{-1}(h\sw{2}).
\end{equation*}
Apply the $\bar{H}$-coaction to the first tensorand and use the facts that the coaction is an algebra map and that $f$ is right $\bar{H}$-colinear to obtain:
\begin{multline*}
f^{-1}(h\sw{1})\sw{0}f(h\sw{2})\otimes f^{-1}(h\sw{1})\sw{1}\pi(h\sw{3}) \otimes  S\left(\pi(h\sw{4})\right)\otimes f^{-1}(h\sw{5})\\
=1\otimes 1\otimes S\left(\pi(h\sw{1})\right)\otimes f^{-1}(h\sw{2}).
\end{multline*}
Multiply the middle legs and use the assumption that $\pi$ is a Hopf algebra map to derive the following equality:
\begin{equation*}
f^{-1}(h\sw{1})\sw{0}f(h\sw{2})\otimes f^{-1}(h\sw{1})\sw{1}\otimes f^{-1}(h\sw{3})
=1\otimes S\left(\pi(h\sw{1})\right)\otimes f^{-1}(h\sw{2}).
\end{equation*}
Finally multiplication of the first and the third tensorands yields the desired formula.

We use \eqref{invcov} to prove that the image of $\theta$ is in $B\otimes H$. Apply $\rho^{\bar{H}}\otimes H$ to $\theta(a\otimes h)$,
where $a\otimes h\in \bar{A}\square_{\bar{H}}H$ (implicit supressed summation). Then, 
\begin{multline*}
\rho^{\bar{H}}(af^{-1}(h\sw{1}))\otimes h\sw{2}=
a\sw{0}f^{-1}(h\sw{1})\sw{0}\otimes a\sw{1}f^{-1}(h\sw{1})\sw{1}\otimes h\sw{2}\\
=a\sw{0}f^{-1}(h\sw{2})\otimes a\sw{1}S\left(\pi(h\sw{1})\right)\otimes h\sw{3}
=af^{-1}(h\sw{3})\otimes \pi(h\sw{1})S\left(\pi(h\sw{2})\right)\otimes h\sw{4}\\
=af^{-1}(h\sw{1})\otimes 1\otimes h\sw{2},
\end{multline*} 
where the fact that  $a\otimes h\in \bar{A}\square_{\bar{H}}H$ was used in the derivation of the third equality.

The inverse of $\theta$ is:
\begin{equation*}
\theta^{-1}:B\otimes H\ni b\otimes h\longmapsto bf(h\sw{1})\otimes h\sw{2}\in \bar{A}\square_{\bar{H}}H.
\end{equation*}
The right $H$-colinearity of $f$ ensures that the map $\theta^{-1}$ has the specified codomain. A straightforward calculation that employs convolution-invertibility of $f$ confirms that $\theta^{-1}$ is really the inverse of $\theta$ as claimed.
Hence $B\otimes H$ is a cleft extension.

Conversely, suppose that $\bar{A}\square_{\bar{H}}H$ is a cleft extension. Then there exists a convolution invertible, normalized
map $j\in \Hom^H(H,\bar{A}\square_{\bar{H}}H)$. In view of the identification
$\Hom^H(H,\bar{A}\square_{\bar{H}}H)\simeq \Hom^{\bar{H}}(H,\bar{A})$ the existence of a right $H$-colinear map $j$ is equivalent to the existence of the right $\bar{H}$-colinear map
 $f:=(\id\square_{\bar{H}}\varepsilon_H)\circ j$. Write $j(h)=h^{(1)}\otimes h^{(2)}$ so that
$f(h)=h^{(1)}\varepsilon_H(h^{(2)})$. Since $j(1_H)=1_{\bar A}\otimes 1_H$,  $f(1_H)=1_{\bar A}$. Write 
$j^{-1}(h)=h^{[1]}\otimes h^{[2]}$  for the convolution inverse of $j$. Then 
$f^{-1}(h):=h^{[1]}\varepsilon_H(h^{[2]})$ is the convolution inverse of $f$, since
\begin{multline*}
f(h\sw{1})f^{-1}(h\sw{2})=h\sw{1}{}^{(1)}\varepsilon_H(h\sw{1}{}^{(2)})
h\sw{2}{}^{[1]}\varepsilon_H(h\sw{2}{}^{[2]})
=h\sw{1}{}^{(1)}h\sw{2}{}^{[1]}\varepsilon_H(h\sw{1}{}^{(2)}h\sw{2}{}^{[2]})\\
=\varepsilon_H(h)\varepsilon_H(1)=\varepsilon_H(h),
\end{multline*}
as required.
\end{proof}

In the setup of Proposition~\ref{cleftprop},
the algebra structure on $B\otimes H$ induced by $\theta$ is:
\begin{multline}\label{crossed}
(b\otimes h)(c\otimes g)
=\theta(\theta^{-1}(b\otimes h)\theta^{-1}(c\otimes g))
=\theta((bf(h\sw{1})\otimes h\sw{2})(cf(g\sw{1})\otimes g\sw{2}))\\
=\theta(bf(h\sw{1})cf(g\sw{1})\otimes h\sw{2}g\sw{2})
=bf(h\sw{1})cf(g\sw{1})f^{-1}(h\sw{2}g\sw{2})\otimes h\sw{3}g\sw{3}, 
\end{multline} 
for all $b,c\in B$ and $g,h\in H$. We write $B\# H$ for the vector space $B\otimes H$ with this algebra structure. 

Proposition~\ref{cleftprop} implies that if there exists a unital, convolution invertible and right $\bar{H}$-colinear map $f$, then the prolongation $\bar{A}\square_{\bar{H}}H$ is a principal $H$-comodule algebra. In terms of the map $f$, the  standard strong connection for a cleft comodule algebra \eqref{lj} comes out as
\begin{equation}\label{f.strong}
\ell(h) = f^{-1}(h\sw 2)\otimes Sh\sw 1 \otimes f(h\sw 3)\otimes h\sw 4 \in \bar{A}\square_{\bar{H}}H\otimes \bar{A}\square_{\bar{H}}H,
\end{equation}
for all $h\in H$.
 If no further assumptions on $\pi$ are made, we cannot yet conclude that $\bar{A}$ is a principal $\bar{H}$-comodule algebra.  More can be said if $H$ is a left principal $\bar{H}$-comodule algebra.

\begin{corollary}\label{cor.princ}
Assume that $\pi$ is onto and that  $H$ is a left principal $\bar{H}$-comodule algebra. If there
exists a right $\bar{H}$-colinear, unital, convolution invertible map $f: H\rightarrow \bar{A}$, then $\bar{A}$ is a principal right $\bar{H}$-comodule algebra.
\end{corollary}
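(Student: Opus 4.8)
The plan is to derive this corollary formally by chaining together Proposition~\ref{cleftprop} with the Hopf-Galois Reduction Lemma~\ref{hogare}, so the argument will be short. First I would feed the hypothesis on $f$ into Proposition~\ref{cleftprop}: the existence of a right $\bar{H}$-colinear, unital, convolution invertible map $f:H\to\bar{A}$ is precisely the condition shown there to be equivalent to the prolongation $\bar{A}\square_{\bar{H}}H$ being a cleft extension of $B=\bar{A}^{\co\bar{H}}$. Thus from $f$ we immediately obtain that $\bar{A}\square_{\bar{H}}H$ is cleft.

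Next I would upgrade ``cleft'' to ``principal''. As recalled in Section~\ref{sec.pri}, every cleft extension is a principal comodule algebra; indeed one can write down an explicit strong connection, and the relevant one in terms of $f$ is displayed in~\eqref{f.strong}. Hence $\bar{A}\square_{\bar{H}}H$ is a principal $H$-comodule algebra. This is the step that converts the purely algebraic datum $f$ into the geometric principality needed to run the reduction.

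Finally I would invoke Lemma~\ref{hogare}. Its hypotheses are exactly the two standing assumptions of the corollary, namely that $\pi:H\to\bar{H}$ is onto and that $H$ is a left principal $\bar{H}$-comodule algebra; together with the principality of $\bar{A}\square_{\bar{H}}H$ just established, the lemma concludes that $\bar{A}$ is a principal right $\bar{H}$-comodule algebra, which is the assertion. I do not anticipate a genuine obstacle, since the corollary is a formal consequence of the two quoted results; the only point requiring care is to verify that the hypotheses line up verbatim, in particular that the surjectivity of $\pi$ and the left principality of $H$ over $\bar{H}$ are exactly what Lemma~\ref{hogare} demands, so that no additional condition on $\pi$ beyond those already assumed is needed.
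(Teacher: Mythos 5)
Your proposal is correct and follows exactly the paper's own argument: Proposition~\ref{cleftprop} shows that $\bar{A}\square_{\bar{H}}H$ is cleft (hence principal, via the strong connection \eqref{f.strong}), and Lemma~\ref{hogare} then reduces principality to $\bar{A}$. The paper's proof is the same two-step chain, stated in one line.
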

\begin{proof} By Proposition~\ref{cleftprop}, $\bar{A}\square_{\bar{H}}H$ is a principal $H$-comodule algebra,  hence $\bar{A}$ is a principal $\bar{H}$-comodule algebra by 
 Lemma~\ref{hogare}. 
\end{proof}

In some situations a strong connection in $\bar{A}$ can be explicitly written down. The following corollary discusses the case applicable to quantum spheres and real projective spaces recalled in Section~\ref{sec.sph.proj}.

\begin{corollary}
\label{strpropp}
Assume that there 
exists a right $\bar{H}$-colinear, unital, convolution invertible map $f:H\rightarrow \bar{A}$. If there exists an $\bar{H}$ bi-colinear section $\imath:\bar{H}\rightarrow H$ of $\pi$ such that
$\imath(1_{\bar{H}})=1_{{H}}$,  then
 \begin{equation}
\ell:\bar{H}\longrightarrow \bar{A}\otimes \bar{A},\quad 
h\longmapsto f^{-1}(\imath(h)\sw{1})\otimes f(\imath(h)\sw{2}), 
\end{equation}
is a strong connection. Consequently,
$\bar{A}$ is a principal $\bar{H}$-comodule algebra.
\end{corollary}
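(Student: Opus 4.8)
The plan is to observe that the proposed map factors as $\ell=(f^{-1}\ot f)\circ\Delta_H\circ\imath$, i.e.\ it is the pullback along the section $\imath$ of the ``cleft'' datum $(f^{-1}\ot f)\circ\Delta_H$ carried by $f$, and then to verify directly the four strong-connection axioms \eqref{strong1}--\eqref{strong4}, now read with $\bar H$ in place of $H$ and with $\rho^{\bar{H}}$ the coaction of $\bar A$. The inputs are exactly: unitality and convolution invertibility of $f$; the right $\bar H$-colinearity of $f$, which reads $\rho^{\bar{H}}(f(g))=f(g\sw1)\ot\pi(g\sw2)$; the covariance property \eqref{invcov} of the convolution inverse established in the proof of Proposition~\ref{cleftprop}, which reads $\rho^{\bar{H}}(f^{-1}(g))=f^{-1}(g\sw2)\ot S(\pi(g\sw1))$; and the two colinearity properties of $\imath$ together with $\pi\circ\imath=\id_{\bar H}$ and $\imath(1)=1$.

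The normalisation \eqref{strong1} is immediate, since $\imath(1)=1$ forces $\Delta_H(\imath(1))=1\ot1$ while $f(1)=f^{-1}(1)=1$. For \eqref{strong2} I would apply convolution invertibility of $f$ to $g=\imath(h)$, obtaining $f^{-1}(\imath(h)\sw1)f(\imath(h)\sw2)=\varepsilon_H(\imath(h))1$; since $\pi$ is a Hopf algebra map and $\pi\circ\imath=\id_{\bar H}$ one has $\varepsilon_H\circ\imath=\varepsilon_{\bar H}\circ\pi\circ\imath=\varepsilon_{\bar H}$, and \eqref{strong2} follows. Both of these steps are routine.

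The content lies in the two colinearity axioms. For \eqref{strong3} I would expand the right-hand side $(\id\ot\rho^{\bar{H}})\circ\ell$ using colinearity of $f$, so that $\pi$ appears on the last leg, and then use the \emph{right} colinearity of $\imath$, namely $\imath(h)\sw1\ot\pi(\imath(h)\sw2)=\imath(h\sw1)\ot h\sw2$, to recognise the result as $(\ell\ot\id)\circ\Delta_{\bar H}$; coassociativity is used to distribute $\Delta_H$ through $\imath(h)$. For \eqref{strong4} I would expand $(\sigma\ot\id)\circ(\rho^{\bar{H}}\ot\id)\circ\ell$ using the covariance \eqref{invcov} of $f^{-1}$, which places $S(\pi(\imath(h)\sw1))$ on the first leg, and then use the \emph{left} colinearity of $\imath$, namely $\pi(\imath(h)\sw1)\ot\imath(h)\sw2=h\sw1\ot\imath(h\sw2)$, to match $(S\ot\ell)\circ\Delta_{\bar H}$.

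The main obstacle is axiom \eqref{strong4}: it is the one place where all the non-formal ingredients must be combined at once --- the covariance \eqref{invcov} of the convolution inverse (itself the substantive computation of the preceding proposition), the left colinearity of the section, and the bookkeeping forced by the flip $\sigma$ and the antipode $S$, which interchange the roles of the Sweedler legs. Once the four axioms are checked, $\ell$ is a strong connection for the $\bar H$-comodule algebra $\bar A$, so $\bar A$ is principal directly from the definition, with no appeal to Lemma~\ref{hogare} or to surjectivity of $\pi$.
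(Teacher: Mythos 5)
Your proposal is correct and follows essentially the same route as the paper: the paper also verifies the four axioms \eqref{strong1}--\eqref{strong4} directly, using unitality, $\varepsilon_H\circ\imath=\varepsilon_{\bar H}$, the right $\bar H$-colinearity of $f$ together with the right colinearity of $\imath$ for \eqref{strong3}, and the covariance \eqref{invcov} of $f^{-1}$ together with the left colinearity of $\imath$ for \eqref{strong4}. The only difference is cosmetic: the paper additionally remarks that principality could be deduced abstractly from Proposition~\ref{cleftprop} and Lemma~\ref{hogare}, but, like you, it relies on the direct check to establish that the stated $\ell$ is a strong connection.
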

\begin{proof}
Since in this case $H$ is a principal left $\bar{H}$-comodule algebra \cite[Corollary~2.6]{HajMaj:pro}, the existence of a strong connection is indeed a corollary of Proposition~\ref{cleftprop} and Lemma~\ref{hogare}. 
Properties \eqref{strong} can be  checked directly as follows. 
By definition, for any $h\in\bar{H}$,  $\pi(\imath(h))=h$, therefore
$\varepsilon_{\bar{H}}(\pi(\imath(h)))=\varepsilon_{\bar{H}}(h)$. Then, 
\begin{equation}\label{imatheps}
\varepsilon_{H}(\imath(h))=\varepsilon_{\bar{H}}(h),
\end{equation}
since $\pi$ is a Hopf algebra map.
It follows that, for all $h\in\bar{H}$,
$$
\ell(1)=f^{-1}(\imath(1_{\bar{H}})\sw{1})\otimes f(\imath(1_{\bar{H}})\sw{2})
=f^{-1}(1_{H}\sw{1})\otimes f(1_{H}\sw{2})=1_{\bar{A}}\otimes 1_{\bar{A}},
$$
where we used the unitality of $f$, $f^{-1}$ and $\imath$. Using eq.~\eqref{imatheps} we obtain
$$
f^{-1}(\imath(h)\sw{1})f(\imath(h)\sw{2})=\varepsilon_H(\imath(h))=\varepsilon_{\bar{H}}(h),
$$
i.e.\ \eqref{strong2} holds. To prove the right covariance \eqref{strong3} of $\ell$ we use the right $H$-colinearity of $f$ and the right $\bar{H}$-colinearity
of $\imath$:
\begin{multline*}
\ell(h\sw{1})\otimes h\sw{2}=f^{-1}(\imath(h\sw{1})\sw{1})\otimes f(\imath(h\sw{1})\sw{2})\otimes h\sw{2}\\
=f^{-1}(\imath(h)\sw{1})\otimes f(\imath(h)\sw{2})\otimes\pi(\imath(h)\sw{3})
=f^{-1}(\imath(h)\sw{1})\otimes f(\imath(h)\sw{2})\sw{0}\otimes f(\imath(h)\sw{2})\sw{1}.
\end{multline*}
Finally, using eq.~\eqref{invcov} we obtain
\begin{multline*}
f^{-1}(\imath(h)\sw{1})\sw{1}\otimes f^{-1}(\imath(h)\sw{1})\sw{0}\otimes f(\imath(h)\sw{2})\\
=S(\pi ( \imath(h) \sw{1} ))\otimes f^{-1}(\imath(h)\sw{2})\otimes f(\imath(h)\sw{3})
=S(h\sw{1}) \otimes f^{-1}(\imath(h\sw{2})\sw 1)\otimes f(\imath(h\sw{2})\sw 2),
\end{multline*}
where, in the derivation of the last equality, we used the left colinearity of $\imath$. This proves the left $\bar{H}$-colinearity of $\ell$, eq.~\eqref{strong4}.
\end{proof}

Since a principal $H$-comodule algebra $A$ is in particular a Hopf-Galois extension of coinvariants $B$ (see Remark~\ref{rem.trans}), the Hopf algebra $H$ acts from the right on the centraliser subalgebra and $H$-subcomodule  of $A$,
$Z_A(B):=\{a\in A\;|\; ab=ba,\;\forall\; b\in B\}$, by
the
{\em Miyashita-Ulbrich action} \cite{Ulb:Gal}. 
Write $h\o \otimes_B h\t$ for the image  of $h\in H$ under the translation map  (see Remark~\ref{rem.trans}). Then, for all $a\in  Z_A(B)$, the Miyashita-Ulbrich action   is given by
$$
a\triangleleft h := h\o a h\t .
$$
On the other hand,  ${}^{co\bar{H}}H$ is a right $H$-coideal and a right $H$-module by the adjoint coaction,
$$
k\triangleleft h := S(h\sw1)kh\sw2, \quad \mbox{for all}\quad  k\in{}^{co\bar{H}}H,\; h\in H.
$$

The following proposition is
related to  the (full) Hopf-Galois Reduction Theorem \cite{g-r99,s-p99,qsng}
 if $H$ is a left principal $\bar{H}$-comodule algebra.
Namely, the map $\Psi$ gives (in accordance with the Hopf-Galois Reduction Theorem) the reduction of 
$\bar{A}\square_{\bar{H}}H$ to $\bar{A}$.

\begin{proposition}
If there
exists a right $\bar{H}$-colinear, unital, convolution invertible map $f: H\rightarrow \bar{A}$, then
the following map:
\begin{equation}\label{F}
\Psi:{}^{\co\bar{H}}H\longrightarrow Z_{B\#H}(B),\quad k\longmapsto f^{-1}(k\sw{1})\otimes k\sw{2},
\end{equation}
is an algebra, right $H$-colinear and right $H$-linear map. 
\end{proposition}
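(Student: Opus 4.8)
The plan is to recognise $\Psi$ as the transport, along the cleft isomorphism $\theta$ of Proposition~\ref{cleftprop}, of the obvious inclusion of ${}^{\co\bar{H}}H$ into the prolongation. Since $f^{-1}$ is a two-sided convolution inverse of $f$, so that $f^{-1}(k\sw1)f(k\sw2)=\varepsilon_H(k)1_{\bar A}$, applying $\theta^{-1}(b\otimes h)=bf(h\sw1)\otimes h\sw2$ to $\Psi(k)=f^{-1}(k\sw1)\otimes k\sw2$ yields $\theta^{-1}(\Psi(k))=f^{-1}(k\sw1)f(k\sw2)\otimes k\sw3=1_{\bar A}\otimes k$. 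Thus $\Psi=\theta\circ\tilde\Psi$, where $\tilde\Psi:{}^{\co\bar{H}}H\to \bar{A}\square_{\bar{H}}H$, $k\mapsto 1_{\bar A}\otimes k$. Because $\theta$ is an isomorphism of right $H$-comodule algebras that restricts to the identity on $B$ (using unitality of $f$), it carries $Z_{\bar{A}\square_{\bar{H}}H}(B)$ isomorphically onto $Z_{B\#H}(B)$ and, by naturality of the canonical map and hence of the translation map, intertwines the two Miyashita-Ulbrich actions. It therefore suffices to establish all three properties for $\tilde\Psi$.

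Well-definedness and the three easy properties are immediate in this picture. First, $1_{\bar A}\otimes k$ lies in $\bar{A}\square_{\bar{H}}H$: applying $\rho^{\bar H}\otimes\id$ gives $1\otimes 1\otimes k$, which equals $1\otimes\pi(k\sw1)\otimes k\sw2$ precisely because $k$ is left $\bar{H}$-coinvariant, i.e.\ $\pi(k\sw1)\otimes k\sw2=1\otimes k$. Since $\bar{A}\square_{\bar{H}}H$ is a subalgebra of the tensor product algebra $\bar A\otimes H$, the identities $(1\otimes k)(1\otimes k')=1\otimes kk'$ and $\tilde\Psi(1)=1\otimes 1$ show that $\tilde\Psi$ is an algebra map; with $B$ embedded via $b\mapsto b\otimes 1$, the computation $(b\otimes 1)(1\otimes k)=b\otimes k=(1\otimes k)(b\otimes 1)$ shows that $\tilde\Psi$ lands in the centraliser of $B$; and $(\id\otimes\Delta_H)(1\otimes k)=1\otimes k\sw1\otimes k\sw2$ together with $\Delta_H({}^{\co\bar H}H)\subseteq {}^{\co\bar H}H\otimes H$ gives right $H$-colinearity.

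The substantive point is right $H$-linearity, for which one needs the Miyashita-Ulbrich action explicitly. I would first record that the cleaving map of $\bar{A}\square_{\bar{H}}H$ is $j(h)=f(h\sw1)\otimes h\sw2$, with two-sided convolution inverse $j^{-1}(h)=f^{-1}(h\sw2)\otimes S(h\sw1)$ (a short check using $h\sw2 S(h\sw3)=\varepsilon_H(h\sw2)1_H$ and $f*f^{-1}=f^{-1}*f=\eta\circ\varepsilon$). Consequently the translation map is $h\o\otimes_B h\t=j^{-1}(h\sw1)\otimes_B j(h\sw2)$ and, for $a\in Z_{\bar A\square_{\bar H}H}(B)$, the action reads $a\triangleleft h=j^{-1}(h\sw1)\,a\,j(h\sw2)$. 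Multiplying in $\bar A\otimes H$ I then obtain
\begin{multline*}
\tilde\Psi(k)\triangleleft h=\bigl(f^{-1}(h\sw2)\otimes S(h\sw1)\bigr)(1\otimes k)\bigl(f(h\sw3)\otimes h\sw4\bigr)\\
=f^{-1}(h\sw2)f(h\sw3)\otimes S(h\sw1)k\,h\sw4,
\end{multline*}
and collapse the first leg via $f^{-1}*f=\eta\circ\varepsilon$ to get $1\otimes S(h\sw1)k\,h\sw2=\tilde\Psi(k\triangleleft h)$, where $k\triangleleft h=S(h\sw1)kh\sw2$ is the adjoint action on ${}^{\co\bar H}H$. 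Transporting along $\theta$ then delivers right $H$-linearity of $\Psi$.

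The main obstacle I anticipate is twofold: first, justifying that $\theta$ transports the Miyashita-Ulbrich action, which rests on the fact that an $H$-comodule algebra isomorphism fixing $B$ intertwines the canonical maps of the two extensions and hence sends the translation map of $\bar{A}\square_{\bar{H}}H$ to that of $B\#H$; and second, keeping the Sweedler bookkeeping straight through the convolution-inverse collapses in the linearity computation. Everything else is formal once the factorisation $\Psi=\theta\circ\tilde\Psi$ is in place.
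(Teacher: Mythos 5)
Your proposal is correct and follows essentially the same route as the paper: both arguments hinge on the identity $\theta^{-1}(\Psi(k))=1_{\bar A}\otimes k$ and compute the Miyashita--Ulbrich action via the cleft strong connection $\ell(h)=f^{-1}(h\sw 2)\otimes S(h\sw 1)\otimes f(h\sw 3)\otimes h\sw 4$, arriving at $1\otimes S(h\sw 1)kh\sw 2$ exactly as in the paper. The only (cosmetic) difference is that you verify the algebra-map, centraliser and colinearity properties for the inclusion $k\mapsto 1\otimes k$ inside $\bar{A}\square_{\bar{H}}H$ and transport them along $\theta$, whereas the paper checks them directly in $B\# H$ using the crossed-product formula \eqref{crossed} and the covariance property \eqref{invcov}.
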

\begin{proof}
Applying $\rho^{\bar H}\otimes \id$ to $f^{-1}(k\sw{1})\otimes k\sw{2}$ and using \eqref{invcov} and that $k\in  {}^{\co\bar{H}}H$ one immediately finds that $\Psi(k) \in B\otimes H$. The product in the algebra $B\#H$ is given by \eqref{crossed}, hence, for all $b\in B$, $k\in  {}^{\co\bar{H}}H$,
$$
(f^{-1}(k\sw{1})\otimes k\sw{2})(b\otimes 1) =  f^{-1}(k\sw 1)f(k\sw 2)bf^{-1}(k\sw 3)\otimes k\sw{4} = bf^{-1}(k\sw{1})\otimes k\sw{2},
$$
i.e.\ $\Psi(k) \in Z_{B\#H}(B)$. Using \eqref{crossed} one easily finds that $\Psi$ is an algebra map. It is clearly a right $H$-colinear map. To prove the right $H$-linearity of $\Psi$, first note that, for all $k\in  {}^{\co\bar{H}}H$,
$$
\theta^{-1} \circ \Psi (k) = 1\otimes k,
$$
where $\theta^{-1}:  B\#H\to \bar{A}\square_{\bar{H}}H$ is the isomorphism constructed in the proof of Proposition~\ref{cleftprop}. The Miyashita-Ulbrich action can be calculated with the help of the strong connection \eqref{f.strong}
\begin{eqnarray*}
(1\otimes k)\triangleleft h &=&  (f^{-1}(h\sw 2)\otimes Sh\sw 1) (1\otimes k) (f(h\sw 3)\otimes h\sw 4)\\
& = & f^{-1}(h\sw 2)f(h\sw 3)\otimes Sh\sw 1k h\sw 4 
=   1 \otimes k\triangleleft h = \theta^{-1} \circ \Psi (k\triangleleft h) .
\end{eqnarray*}
Therefore, $\Psi$ is a right $H$-module map as required.
\end{proof}

\section{Prolongations of quantum spheres (over quantum real projective spaces)}\label{sec.exam}
\setcounter{equation}{0}

\subsection{Prolongations to $SU_q(2)$-bundles}
As the first illustration of Proposition~\ref{cleftprop} we consider the prolongation of  principal ${\mathcal{O}(\mZt)}$-comodule algebras  $\cO(S^m_q)$ to  principal $\cO(SU_q(2))$-comodule algebras along the Hopf algebra map $\pi: \mathcal{O}(SU_{q}(2))\to \mathcal{O}(\mZt)$ \eqref{pi}. As explained in Section~\ref{sec.sph.proj}, the coordinate algebras of quantum spheres form a hierarchy. Since the algebra maps in \eqref{hier} are also right ${\mathcal{O}(\mZt)}$-colinear, the prolongation gives rise to the following hierarchy of principal  $\mathcal{O}(SU_{q}(2))$-comodule algebras:
\begin{equation}\label{hier2}
\xymatrix{
 \cO(S^{m+1}_q)\square_{\mathcal{O}(\mZt)}\cO(SU_q(2)) \ar[rr]^-{f_{m}\otimes\id} && \cO(S^{m}_q)\square_{\mathcal{O}(\mZt)}\cO(SU_q(2)) }, \quad m=2,3,\ldots
\end{equation}
 Now, the Hopf algebra map $\pi: \mathcal{O}(SU_{q}(2))\to \mathcal{O}(\mZt)$ has a unital bicolinear splitting $\imath$, given in \eqref{iota}. This, in particular, implies that all the maps in \eqref{hier2} are surjective ($\mathcal{O}(SU_{q}(2))$ is coflat as a left $\mathcal{O}(\mZt)$-comodule). Furthermore we can invoke the reduction theorem to conclude that each of the  principal $\mathcal{O}(\mZt)$-comodule algebras  $\mathcal{O}(S^m_{q})$ is a reduction of the principal $\mathcal{O}(SU_{q}(2))$-comodule algebra $\cO(S^{m}_q)\square_{\mathcal{O}(\mZt)}\cO(SU_q(2))$. The coaction invariants of both $\cO(S^{m}_q)\square_{\mathcal{O}(\mZt)}\cO(SU_q(2))$ and $\mathcal{O}(S^m_{q})$ necessarily coincide with $\cO(\rp^m_q)$ (see \eqref{coinv.prol}).

Since $\cO(S^{3}_q) = \cO(SU_q(2))$ more can be said about the last two algebras in hierarchy \eqref{hier2}. First, $f_2: \cO(SU_q(2)) \to \cO(S^2_q)$ (see \eqref{even})  is a right ${\mathcal{O}(\mZt)}$-colinear algebra map.  As an algebra map,  $f_2$ is   convolution invertible with the (convolution) inverse $f^{-1}_2 = f_2\circ S$, i.e.\
$$
f_2^{-1} : a \longmapsto z_0^*, \qquad b\longmapsto -q^{-1}z_1, \qquad a^*\longmapsto z_0, \qquad b^*\longmapsto -qz_1.
$$
Therefore, $\cO(S^2_q)\square_{\mathcal{O}(\mZt)}\cO(SU_q(2))$ is a cleft (in fact trivial) principal comodule algebra by Proposition~\ref{cleftprop}.

Since $f_2$ is an algebra map, there is a left action of $\mathcal{O}(SU_{q}(2))$ on $\mathcal{O}(\rp^2_q)$,
\begin{equation}
h\triangleright x=f_2(h\sw{1})xf_2(S(h\sw{2})),
\end{equation}
and the algebra structure of $\cO(S^2_q)\square_{\mathcal{O}(\mZt)}\cO(SU_q(2))$ calculated from \eqref{crossed}  is that of the smash product $\mathcal{O}(\rp^2_q)\# \mathcal{O}(SU_{q}(2))$,
$$
(x\otimes h)(y\otimes h') = xh\sw 1\triangleright y\otimes h\sw 2h', \qquad \mbox{for all}\quad  x,y\in \mathcal{O}(\rp^2_q),\; h,h'\in \mathcal{O}(SU_{q}(2)).
$$
Using the explicit form of $f_2$ one easily derives the formulae for the action of generators of $\mathcal{O}(SU_{q}(2))$ on generators of $\mathcal{O}(\rp^2_q)$:
$$
a\triangleright P=q^2P+q^2(1-q^2)P^2, \quad 
b\triangleright P=q(1-q^2)PT, \quad 
a\triangleright R=R+q^4(1-q^2)PR, \quad
$$
$$
b\triangleright R=q(1-q^2)TR,\quad 
a\triangleright T=qT+q^3(1-q^2)PT,\quad
b\triangleright T=(1-q^2)T^2.
$$

Thus we can conclude that  the  {\em non-trivial} principal $\mathcal{O}(\mZt)$-comodule algebra  $\mathcal{O}(S^2_{q})$ is a reduction of the {\em trivial} principal $\mathcal{O}(SU_{q}(2))$-comodule algebra $\mathcal{O}(\rp^2_q)\# \mathcal{O}(SU_{q}(2))$ (both over $\mathcal{O}(\rp^2_q)$).

The penultimate algebra in hierarchy \eqref{hier2}, $\cO(S^3_q)\square_{\mathcal{O}(\mZt)}\mathcal{O}(SU_{q}(2))$, is a trivial principal comodule algebra since the identity map $\mathcal{O}(SU_{q}(2))\to \cO(S^3_q)$ fulfills all the assumptions of Proposition~\ref{cleftprop}.

\subsection{Prolongations to $U(1)$-bundles}
One can use  Hopf algebra map $\pi_2: \cO(U(1)) \to \mathcal{O}(\mZt)$ \eqref{pi0} to construct prolongations of the $\cO(S^m_q)$ to principal $\cO(U(1))$-comodule algebras. The sequence of algebra maps \eqref{hier} now yields a sequence of algebra maps between principal $\cO(U(1))$-comodule algebras
\begin{equation}\label{hier3}
\xymatrix{
 \cO(S^{m+1}_q)\square_{\mathcal{O}(\mZt)}\cO(U(1))  \ar[rr]^-{f_{m}\otimes\id} && \cO(S^{m}_q)\square_{\mathcal{O}(\mZt)}\cO(U(1)) }, \quad m=2,3,\ldots
\end{equation}
Since $\pi_2$ is split by a bicomodule map $u\mapsto v$ and each of the $f_m$ is surjective, also all the maps in \eqref{hier3} are surjective.

\begin{proposition}
\label{prop.nontriv}
For all natural numbers $m>1$, the principal $\cO(U(1))$-comodule algebras  $\cO(S^{m}_q)\square_{\mathcal{O}(\mZt)}\cO(U(1))$ are non-trivial.
\end{proposition}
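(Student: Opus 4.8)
The plan is to reformulate non-triviality as the \emph{absence of an invertible homogeneous element}, then to reduce the whole hierarchy to its bottom member and detect the obstruction there by a Fredholm index.

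First I would unwind the notion of triviality. By Proposition~\ref{cleftprop} the prolongation $\cO(S^m_q)\square_{\mathcal{O}(\mZt)}\cO(U(1))$ is cleft precisely when there is a right $\mathcal{O}(\mZt)$-colinear, unital, convolution invertible map $f:\cO(U(1))\to\cO(S^m_q)$. Since $\cO(U(1))=\C[v,v^{-1}]$ is generated by the grouplike unitary $v$, convolution invertibility evaluated at $v$ reads $f(v)f^{-1}(v)=f^{-1}(v)f(v)=1$, so $x:=f(v)$ is an honest invertible element of $\cO(S^m_q)$; colinearity forces it to be homogeneous of odd degree, $\rho^{\mathcal{O}(\mZt)}(x)=x\otimes u$. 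Conversely such an $x$ yields the colinear unital algebra map $v^n\mapsto x^n$, so the prolongation is in fact \emph{trivial}. Hence cleft, trivial, and ``contains an invertible odd element'' coincide here, and the Proposition is equivalent to the assertion that for $m>1$ the algebra $\cO(S^m_q)$ has no invertible element of odd $\mathcal{O}(\mZt)$-degree.

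Next I would reduce to $m=2$. Each map $f_k:\cO(S^{k+1}_q)\to\cO(S^k_q)$ in the hierarchy \eqref{hier} is a surjective $\mathcal{O}(\mZt)$-colinear algebra map, hence so is the composite $\cO(S^m_q)\to\cO(S^2_q)$ (the identity when $m=2$). A surjective algebra map sends invertibles to invertibles and, being colinear, preserves the $\mathcal{O}(\mZt)$-degree, so an invertible odd element of $\cO(S^m_q)$ would push down to one in $\cO(S^2_q)$. It therefore suffices to treat the equatorial Podle\'s sphere $\cO(S^2_q)$. For this I would use the irreducible $*$-representation $\pi$ on $\ell^2(\N)$ given on an orthonormal basis $(e_k)_{k\ge 0}$ by $\pi(z_1)e_k=q^{k+1}e_k$ and $\pi(z_0)e_k=\sqrt{1-q^{2k}}\,e_{k-1}$ (with $\pi(z_0)e_0=0$); a direct check against \eqref{sph} confirms this is a representation by bounded operators. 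Here $\pi(z_1)$ is compact and $\pi(z_0)$ is a weighted unilateral shift, so $\pi$ maps $\cO(S^2_q)$ into the Toeplitz algebra and induces a grading-preserving symbol $*$-homomorphism $\tau:\cO(S^2_q)\to C(S^1)=\cO(U(1))$ onto the Calkin quotient, sending $z_0$ to the winding-one unitary and $z_1$ to $0$ (the $\mZt$-action on $S^1$ being antipodal). If $x\in\cO(S^2_q)$ were invertible and odd, then $\pi(x)$ would be invertible on $\ell^2(\N)$, hence of Fredholm index $0$; by the Toeplitz index theorem (Gohberg--Krein) this index equals minus the winding number of $\tau(x)$, forcing $\mathrm{wind}(\tau(x))=0$. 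But $\tau(x)$ is an invertible odd element of $C(S^1)$, i.e.\ a nowhere-vanishing function $g$ with $g(-t)=-g(t)$, and any such function has \emph{odd} winding number. This contradiction shows $\cO(S^2_q)$ has no invertible odd element, which proves the Proposition.

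The main obstacle is this last step. The non-triviality is a $2$-torsion phenomenon (the associated line bundle over $\rp^2_q$ squares to a trivial one), so it is invisible to traces and cyclic cocycles, whose pairings annihilate torsion; one genuinely needs an integer- or parity-valued invariant to separate the associated non-free module over $\cO(\rp^2_q)$ from the free one. Producing such an invariant — here the Fredholm index attached to the boundary Toeplitz extension, equivalently pinning down the relevant class in $K_0(\cO(\rp^2_q))$ along the lines of Hong--Szyma\'nski — is the crux of the proof, whereas the reformulation and the reduction to $m=2$ are routine.
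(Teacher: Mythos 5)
Your proposal is correct, and the overall skeleton (reduce triviality to the existence of a suitable convolution-invertible colinear map via Proposition~\ref{cleftprop}, then push the obstruction down the hierarchy \eqref{hier} to low dimension) is the same as the paper's, which runs the reduction as an induction on $m$ using the surjective colinear maps $f_m$. Where you genuinely diverge is in the base case. The paper disposes of $m=2,3$ by quoting the fact from \cite{HajMaj:pro} that the only units of $\cO(S^2_q)$ and $\cO(S^3_q)$ are the nonzero scalars, so any convolution-invertible $f$ has $f(v)=\lambda 1$, which cannot be colinear. You instead prove from scratch the weaker statement that $\cO(S^2_q)$ has no invertible element of odd $\mathcal{O}(\mZt)$-degree, via the representation on $\ell^2(\N)$, the Toeplitz symbol map, the index theorem for the Toeplitz extension, and the one-dimensional Borsuk--Ulam fact that an odd nowhere-vanishing function on $S^1$ has odd winding number. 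Your representation does satisfy \eqref{sph} (it is $\pi_{0,+}$ of \eqref{rep.1} for $n=1$, restricted to the sphere), the weighted shift is a compact perturbation of the unilateral shift so the image does lie in the Toeplitz algebra, and the symbol map is grading-preserving, so the argument is sound. What your route buys is self-containedness and an explicit $K$-theoretic/parity explanation of \emph{why} the bundle is non-trivial (a mod-2 index obstruction), at the cost of more analytic machinery; what the paper's route buys is brevity, plus the stronger input that \emph{all} units are scalars, which it reuses elsewhere. Your preliminary equivalence (cleft $\Leftrightarrow$ trivial $\Leftrightarrow$ existence of an invertible odd element, because $v$ is grouplike and an invertible odd $x$ generates a colinear algebra map $v^n\mapsto x^n$) is also a small sharpening not made explicit in the paper, and it is what lets you replace the induction by a single surjection onto $\cO(S^2_q)$.
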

\begin{proof}
This can be proven by induction. If $m=2,3$, then non-zero multiples of the identity are the only units in $\cO(S^m_q)$ \cite{HajMaj:pro}. Since $\cO(U(1))$ is the algebra of Laurent polynomials in one variable and the generator of $\cO(U(1))$ is a group-like element, any convolution invertible map $f: \cO(U(1))\to \cO(S^m_q)$ must have the form $f(v) = \lambda 1$, $\lambda\in \C^\times$. Such a map cannot be right $\mathcal{O}(\mZt)$-colinear, as the coactions send $v$ to $v\ot u$ and $1$ to $1\ot 1$. Therefore, there are no right $\mathcal{O}(\mZt)$-colinear convolution invertible maps $\cO(U(1))\to \cO(S^m_q)$, and $\cO(S^{m}_q)\square_{\mathcal{O}(\mZt)}\cO(U(1))$ is non-cleft (hence non-trivial) by Proposition~\ref{cleftprop} if $m=2,3$.

Take $m >1$  for which there are no convolution invertible $\mathcal{O}(\mZt)$-colinear maps from $\cO(U(1))$ to $\cO(S^{m}_q)$. Suppose there exists a convolution invertible, $\mathcal{O}(\mZt)$-colinear map
$f: \cO(U(1))\to \cO(S^{m+1}_q)$. Then $f_m\circ f$ would be a convolution invertible, 
$\mathcal{O}(\mZt)$-colinear map
from $\cO(U(1))$ to $\cO(S^{m}_q)$ contradicting the inductive assumption.
\end{proof}

\begin{proposition}\label{prop.present}
\begin{enumerate}
\item For all integers $n\geq1$, $\cO(S^{2n+1}_q)\square_{\mathcal{O}(\mZt)}\cO(U(1))$ is an algebra isomorphic to $\cO(S^{2n+1}_q)\otimes \cO(U(1))$.
\item For all integers $n>1$, $\cO(S^{2n}_q)\square_{\mathcal{O}(\mZt)}\cO(U(1))$ is a subalgebra of $\cO(S^{2n}_q)\otimes \cO(U(1))$ isomorphic to a polynomial $*$-algebra 
$\mathcal{A}^{2n}$
generated by $\zeta_0, \zeta_1,\ldots, \zeta_n$ and a central unitary $\xi$ subject to the following relations
\begin{subequations}
\label{kle}
\begin{gather}
\label{kle1}
\zeta_i\zeta_j = q\zeta_j\zeta_i \quad \mbox{for $i<j$}, \qquad \zeta_i\zeta^*_j = q\zeta_j^*\zeta_i \quad \mbox{for $i\neq j$},\\
\label{kle2}
\zeta_i\zeta_i^* = \zeta_i^*\zeta_i + (q^{-2}-1)\sum_{m=i+1}^n \zeta_m\zeta_m^*, \qquad \sum_{m=0}^n \zeta_m\zeta_m^*=1, \qquad \zeta_n^* = \zeta_n\xi.
\end{gather}
\end{subequations}

\end{enumerate}
\end{proposition}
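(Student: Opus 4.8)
The plan is to compute the cotensor product explicitly and then construct the isomorphism by hand in each parity, exploiting the difference between odd and even spheres. Since $\cO(U(1))=\bigoplus_{k\in\Z}\C v^k$ with $\pi_2(v^k)=u^k$, and since the $\mathcal{O}(\mZt)$-coaction endows $\cO(S^m_q)$ with the $\Z_2$-grading $\cO(S^m_q)=\cO(S^m_q)_0\oplus\cO(S^m_q)_1$ in which $\cO(S^m_q)_0=\cO(\rp^m_q)$ and every $z_i,z_i^*$ is odd, evaluating the defining relation of the cotensor product on $\sum_k a_k\otimes v^k$ forces $\rho^{\mathcal{O}(\mZt)}(a_k)=a_k\otimes u^k$, i.e.\ $a_k\in\cO(S^m_q)_{k\bmod 2}$. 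Hence
\[
\cO(S^m_q)\square_{\mathcal{O}(\mZt)}\cO(U(1))=\bigoplus_{k\in\Z}\cO(S^m_q)_{k\bmod 2}\otimes\C v^k,
\]
a $\Z$-graded (by $v$-degree) subalgebra of $\cO(S^m_q)\otimes\cO(U(1))$. This is the object to be identified in both parts.

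For part (1) the crucial point is that the relations \eqref{sph} are homogeneous for the finer $\Z$-grading $\deg z_i=1$, $\deg z_i^*=-1$ of $A:=\cO(S^{2n+1}_q)$; this grading exists exactly because for odd spheres $z_n$ is not self-adjoint. I would define $\phi:A\otimes\cO(U(1))\to A\square_{\mathcal{O}(\mZt)}\cO(U(1))$ on generators by $z_i\otimes 1\mapsto z_i\otimes v$ and $1\otimes v\mapsto 1\otimes v^2$. The elements $z_i\otimes v$ satisfy \eqref{sph}, since every mixed product $z_iz_j^*$ picks up the scalar $v\,v^{-1}=1$, and $1\otimes v^2$ is a central unitary commuting with them; thus $\phi$ is a well-defined $*$-algebra map. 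For $a$ in the $\Z$-degree-$d$ component $A_d$ one gets $\phi(a\otimes v^k)=a\otimes v^{d+2k}$, exhibiting $\phi$ as a linear bijection of $\bigoplus_{d,k}A_d\otimes\C v^k$ onto $\bigoplus_d A_d\otimes\bigl(\bigoplus_{l\equiv d\,(2)}\C v^l\bigr)$, i.e.\ onto the cotensor product. This proves (1); note $\phi$ is not colinear, in accordance with Proposition~\ref{prop.nontriv}.

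For part (2) the finer $\Z$-grading is no longer available, because $z_n^*=z_n$ would force $\deg z_n=\deg z_n^*$; this is precisely what makes the even prolongation genuinely new. I would define $\psi:\mathcal{A}^{2n}\to\cO(S^{2n}_q)\square_{\mathcal{O}(\mZt)}\cO(U(1))$ by $\zeta_i\mapsto z_i\otimes v$ and $\xi\mapsto 1\otimes v^{-2}$. The images of $\zeta_0,\dots,\zeta_n$ satisfy \eqref{kle1}--\eqref{kle2} just as in part (1), and the extra relation $\zeta_n^*=\zeta_n\xi$ holds because $(z_n\otimes v)^*=z_n\otimes v^{-1}=(z_n\otimes v)(1\otimes v^{-2})$ by $z_n^*=z_n$; hence $\psi$ is a well-defined $*$-algebra map. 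Surjectivity is a direct generation argument: any basis element $a\otimes v^k$ of the cotensor product (with $a$ a sphere monomial of parity $k$) is a product of the $z_i\otimes v$, $z_i^*\otimes v^{-1}$ and a power of $1\otimes v^{\pm2}$.

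The injectivity of $\psi$ is the main obstacle, and I would establish it through compatible bases rather than a confluence argument on $\mathcal{A}^{2n}$. Using \eqref{kle} to order letters, to replace each $\zeta_n^*$ by $\zeta_n\xi$, to move the central $\xi$ to the right, and to apply the radius relation, one shows that the words $m\cdot\xi^c$ span $\mathcal{A}^{2n}$, where $m$ runs through the ordered monomials in $\zeta_0,\dots,\zeta_n,\zeta_0^*,\dots,\zeta_{n-1}^*$ indexing the standard vector-space basis $\{\bar m\}$ of $\cO(S^{2n}_q)$ and $c\in\Z$. Now $\psi(m\,\xi^c)=\bar m\otimes v^{e(\bar m)-2c}$, where $e(\bar m)\in\Z$ (the number of unstarred minus starred letters) has the same parity as the $\Z_2$-degree of $\bar m$; the assignment $(m,c)\mapsto(\bar m,e(\bar m)-2c)$ is injective, and the $\bar m$ form a basis of $\cO(S^{2n}_q)$, so the $\psi(m\,\xi^c)$ are distinct members of the natural basis $\{\bar m\otimes v^k\}$ of the cotensor product, hence linearly independent. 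A spanning set whose image is linearly independent is itself a basis and is mapped injectively; together with surjectivity this makes $\psi$ a $*$-algebra isomorphism. The only external input is the well-known monomial basis of the quantum sphere $\cO(S^{2n}_q)$, which I would simply cite.
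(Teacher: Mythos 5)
Your proposal is correct, and its overall strategy---identify the cotensor product as the parity-matched part of $\cO(S^m_q)\otimes\cO(U(1))$, map the generators to $z_i\otimes v$ and $1\otimes v^{\mp 2}$, and compare monomial bases---is the same as the paper's. The two points where you genuinely diverge are worth noting. For part (1) the paper does not argue with the $\Z$-grading directly but isolates the mechanism into Lemma~\ref{lemma.ident}: for a \emph{commutative} Hopf algebra $H$ and any $H$-comodule algebra $A$, the map $a\otimes h\mapsto a\sw 0\otimes a\sw 1h$ gives an algebra isomorphism $A\otimes{}^{co\bar H}H\simeq A\square_{\bar H}H$. Your map $\phi$ is exactly this isomorphism written on generators (the $\Z$-grading with $\deg z_i=1$, $\deg z_i^*=-1$ \emph{is} the $\cO(U(1))$-coaction), so the content is identical; what the abstract formulation buys is reusability---the paper applies the same lemma again in Remark~\ref{rem.lens} to identify $\cO(S^{2n+1}_q)\square_{\cO(\Z_p)}\cO(U(1))$---whereas your version makes the non-colinearity of the isomorphism and its consistency with Proposition~\ref{prop.nontriv} more transparent. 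For part (2) the paper invokes the Diamond Lemma to establish that the ordered monomials $\zeta_0^{k}\zeta_1^{k_1}\cdots\zeta_n^{k_n}\zeta_1^{*l_1}\cdots\zeta_{n-1}^{*l_{n-1}}\xi^m$ form a basis of $\mathcal{A}^{2n}$ and then checks that their images are linearly independent; you observe that one only needs these monomials to \emph{span} $\mathcal{A}^{2n}$ (a straightforward rewriting argument requiring no confluence check), since a spanning set with linearly independent image is automatically a basis mapped injectively. That is a mild but genuine economy over the paper's argument, at the cost of being slightly more careful about the convention for $\zeta_0$ (the basis allows $\zeta_0^k$ with $k\in\Z$, i.e.\ never both $\zeta_0$ and $\zeta_0^*$ in one monomial); with that convention your counting map $(m,c)\mapsto(\bar m,\,e(\bar m)-2c)$ is indeed injective and the proof closes as you describe.
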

\begin{proof}
The proof of part (1) is based on the following 
\begin{lemma}\label{lemma.ident}
Let $H$ be a commutative Hopf algebra, let $\pi: H\to \bar{H}$ be a Hopf algebra map, and let $A$ be a right $H$-comodule algebra with coaction $\rho^H$. View $A$ as a right $\bar{H}$-comodule algebra by the coaction $\rho^{\bar{H}} = (\id \otimes \pi)\circ\rho^H$ and $H$ as a left $\bar{H}$-comodule algebra by the coaction $(\pi\otimes \id)\circ \Delta_H$. Then the algebra $A\square_{\bar{H}} H$ is isomorphic with $A\otimes {}^{co\bar{H}}H$.
\end{lemma}
\begin{proof}
For all $a\in A$, write $a\sw 0\otimes a\sw 1$ for $\rho^H(a)$. Then the isomorphism is
$$
\kappa: A\otimes {}^{co\bar{H}}H\longrightarrow A\square_{\bar{H}} H, \qquad a\otimes h \longmapsto a\sw 0\otimes a\sw 1h,
$$
with the inverse
$$
\kappa^{-1}: A\square_{\bar{H}} H\longrightarrow A\otimes {}^{co\bar{H}}H, \qquad \sum_i a^i\otimes h^i \longmapsto \sum_i a^i\sw 0 \otimes S(a^i\sw 1)h^i.
$$
This can be checked by a straightforward calculation. We only mention that the commutativity of $H$ ensures that $\kappa$ is an algebra map, and that, in general, $\kappa$ is not an isomorphism of $H$-comodule algebras (it does not respect the obvious coactions obtained by restrictions of $\id\otimes \Delta_H$).
\end{proof}

Observe that each of the odd-dimensional spheres is a right $\cO(U(1))$-comodule algebra with the coaction $z_i\mapsto z_i\ot v$. The $\mathcal{O}(\mZt)$ coaction is obtained from the $\cO(U(1))$-coaction by applying the Hopf algebra map $\pi_2$ \eqref{pi0}. Since $\cO(U(1))$ is a commutative algebra, Lemma~\ref{lemma.ident} can be applied and we deduce that $\cO(S^{2n+1}_q)\square_{\mathcal{O}(\mZt)}\cO(U(1))$ is an algebra isomorphic to $\cO(S^{2n+1}_q)\otimes {}^{co\mathcal{O}(\mZt)}\cO(U(1))$. Note next that $\cO(U(1)) = \C[v,v^*]$ is the algebra of Laurent polynomials ($v^*$ is the inverse of $v$) and the coaction invariant subalgebra consists of all combinations of monomials of even degree, i.e.\ ${}^{co\mathcal{O}(\mZt)}\cO(U(1)) = \C[v^2,v^{*2}]$. The latter is again the algebra of Laurent polynomials in one variable, hence it is isomorphic with $\cO(U(1))$. This completes the proof of part (1).

(2) The vector space $\cO(S^{2n}_q)\square_{\mathcal{O}(\mZt)}\cO(U(1))$ is spanned by elements $x\ot v^k$, where $x$ is a monomial in $\cO(S^{2n}_q)$ with degree congruent to $k$ modulo 2. All such elements can be generated by multiplying
\begin{equation}\label{zetaz}
\zeta_i = z_i \ot v, \qquad \xi = 1\ot v^{*2},
\end{equation}
and their conjugates. Clearly $1\ot v^{*2}$ is central  and unitary, the relations between the $z_i$ are inherited from relations \eqref{sph}. In this way we obtain all but the last of relations \eqref{kle}. Since $z_n$ is self-adjoint, 
$$
\zeta^*_n  = z_n \ot v^* = \zeta_n \xi. 
$$
This implies that equations \eqref{zetaz} define a surjective $*$-algebra map $\Phi$ from the $*$-algebra $\mathcal{A}^{2n}$ generated by $\zeta_i, \xi$ and relations \eqref{kle} to $\cO(S^{2n}_q)\square_{\mathcal{O}(\mZt)}\cO(U(1))$. Employing Diamond Lemma, one easily finds that a basis for $\mathcal{A}^{2n}$ consists of monomials
$$
\zeta_0^{k}\zeta_1^{k_1}\cdots \zeta_n^{k_n}\zeta_1^{*l_1}\cdots \zeta_{n-1}^{*l_{n-1}}\xi^m, \qquad k,m\in \Z,\ k_i,l_i\in \N,
$$
where, by convention, $\zeta_0^{-|k|}$ denotes $\zeta_0^{*|k|}$. Since all these elements are mapped by $\Phi$ to linearly independent vectors
$$
z_0^{k}z_1^{k_1}\cdots z_n^{k_n}z_1^{*l_1}\cdots z_{n-1}^{*l_{n-1}}\ot v^{\sum_ik_i - \sum_il_i +k-2m}, \qquad k,m\in \Z,\ k_i,l_i\in \N, 
$$
 in $\cO(S^{2n}_q)\square_{\mathcal{O}(\mZt)}\cO(U(1))$, $\Phi$ is an injective $*$-algebra map as required.
\end{proof}

Let us make a few comments on the representation theory of algebras constructed in Proposition~\ref{prop.present}. Due to the close relationship of these algebras with coordinate algebras of quantum spheres, their representation theory bears close resemblance to that of the latter (see e.g.\ \cite[Section~3]{HawLan:fred} for the detailed discussion of representations of quantum spheres). We concentrate on the (more interesting) algebras in part (2) of Proposition~\ref{prop.present}. Non-equivalent irreducible $*$-representations of $\mathcal{A}^{2n}\simeq \cO(S^{2n}_q)\square_{\mathcal{O}(\mZt)}\cO(U(1))$ split into two classes depending on whether $\zeta_n$ is represented by a non-zero or the zero operator. First, there is a family of 
representations labeled by  $\phi\in (0,2\pi)$ and the sign $\pm$.  For each $\phi$ and $\pm$ the representation space $V_{\phi,\pm}$ of $\pi_{\phi,\pm}$ has an orthonormal basis: $|k_0,k_1, \ldots, k_{n-1}\rangle$, $k_i=0,1,2,\ldots$. On this basis of $V_{\phi,\pm}$, the (bounded) operators representing $\zeta_i,\xi$ act as follows: 
 \begin{subequations}
\label{rep.1}
\begin{gather}
 \pi_{\phi,\pm}(\zeta_n) |k_0, \ldots, k_{n-1}\rangle = \pm e^{i\phi} q^{k_0+\ldots +k_{n-1} +n}|k_0, \ldots, k_{n-1}\rangle ,\\
 \pi_{\phi,\pm}(\zeta_l) |k_0, \ldots, k_{n-1}\rangle =(1-q^{2k_l})^{1/2}q^{k_0+\ldots +k_{l-1} +l}|k_0, \ldots, k_l -1,  \ldots k_{n-1}\rangle , \quad l<n, \\
 \pi_{\phi,\pm}(\xi) |k_0, \ldots, k_{n-1}\rangle =  e^{-2i\phi} |k_0, \ldots, k_{n-1}\rangle .
\end{gather}
\end{subequations}
Second, if $\zeta_n$ is represented by the zero operator, then the $S_q^{2n}$-sphere part of the algebra $\cO(S^{2n}_q)\square_{\mathcal{O}(\mZt)}\cO(U(1))$ collapses to the odd-dimensional quantum sphere $S^{2n-1}_q$, and we are essentially in the situation described in part (1) of Proposition~\ref{prop.present}. Therefore,   representations 
of $\mathcal{A}^{2n}$
are given by the tensor product of those for the odd-dimensional quantum sphere $S_q^{2n-1}$ and the circle group $U(1)$. Explicitly, there is a family of representations $\pi_{\lambda,\mu}$ labeled by $\lambda$, $\mu$ such that $ |\lambda|=|\mu|=1$. The orthonormal basis for the corresponding representation space $V_{\lambda,\mu}$ is $|k_0,k_1, \ldots, k_{n-1}\rangle$, $k_i=0,1,2,\ldots$. On this basis of $V_{\lambda,\mu}$, the (bounded) operators representing $\zeta_i,\xi$ act as follows: 
 \begin{subequations}
\label{rep.2}
\begin{gather}
\label{rep.2.1}
 \pi_{\lambda,\mu}(\zeta_n) = 0, \qquad  \pi_{\lambda,\mu}(\zeta_{n-1}) |k_0, \ldots, k_{n-1}\rangle = \lambda q^{k_0+\ldots +k_{n-1} +n}|k_0, \ldots, k_{n-1}\rangle ,\\
 \pi_{\lambda,\mu}(\zeta_l) |k_0, \ldots, k_{n-1}\rangle =(1-q^{2k_l})^{1/2}q^{k_0+\ldots +k_{l-1} +l}|k_0, \ldots, k_l -1,  \ldots k_{n-1}\rangle , \;\;\; l<n-1,\\
 \pi_{\lambda,\mu}(\xi) |k_0, \ldots, k_{n-1}\rangle = \mu |k_0, \ldots, k_{n-1}\rangle .
\end{gather}
\end{subequations}

In the case of algebras $\cO(S^{2n-1}_q)\square_{\mathcal{O}(\mZt)}\cO(U(1))$, by the part (1) of Proposition~\ref{prop.present} irreducible representations have the form \eqref{rep.2} (without the first of equations \eqref{rep.2.1}), where the $\zeta_l$ should be replaced by the $z_l$ and $\xi$ corresponds to the unitary generator of $\cO(U(1))$.

The representations $\pi_{\phi,\pm}$ can be used to construct Fredholm modules \cite[Chapter~4]{Con:non} over the algebras  $\mathcal{A}^{2n}\simeq  
\cO(S^{2n}_q)\square_{\mathcal{O}(\mZt)}\cO(U(1))$ in the same way Fredholm modules over coordinate algebras of even quantum spheres are constructed in \cite{HawLan:fred}. For each $\phi$, an even Fredholm module $(\mathcal{H}_\phi, F,\gamma)$ is given by the representation
$$
\pi_\phi = \pi_{\phi,+} \oplus \pi_{\phi,-} \qquad \mbox{on}\qquad \mathcal{H}_\phi = V_{\phi,+} \oplus V_{\phi,-},
$$
 with operators 
 $$
 F = \begin{pmatrix} 0 & 1 \cr 1 & 0\end{pmatrix}, \qquad \gamma = \begin{pmatrix} 1 & 0 \cr 0 & -1\end{pmatrix}.
 $$
Obviously $F$ is self-adjoint, squares to 1 and anti-commutes with $\gamma$, as required. As in \cite[Section~4.1.1]{HawLan:fred}, for all $a\in \cO(S^{2n}_q)\square_{\mathcal{O}(\mZt)}\cO(U(1))$,
 $$
 [F, \pi_\phi(a)] = \begin{pmatrix} 0 & -\pi_{\phi,+}(a) + \pi_{\phi,-}(a) \cr \pi_{\phi,+}(a) - \pi_{\phi,-}(a) & 0\end{pmatrix}.
 $$
 Note that $\pi_{\phi,+}- \pi_{\phi,-} =\pi_{\phi,+}\circ (\id -\nu)$, where $\nu$ is the $*$-algebra automorphism of  
  $\mathcal{A}^{2n}$
 given by
 $$
 \nu :\xi \longmapsto \xi, \quad  \zeta_i \longmapsto \left\{ \begin{array}{ll}
 \zeta_i & \mbox{if $i\neq n$} \\
-\zeta_n & \mbox{if $i= n$}\ . 
\end{array}
\right.
$$
This implies that $\pi_{\phi,+}(a) - \pi_{\phi,-}(a)$ is always a multiple of  $\pi_{\phi,+}(\zeta_n)$, which is a compact, trace-class operator.
Therefore, $[F, \pi_\phi(a)]$ is a compact, in fact trace-class,  operator on $\mathcal{H}_\phi$. Thus $(\mathcal{H}_\phi, F,\gamma)$ is a 1-summable Fredholm module over 
 $\mathcal{A}^{2n}$.
  This allows one to define a trace $\tau$ or the zero-component of the Chern character of $(\mathcal{H}_\phi, F,\gamma)$ by
 $$
 \tau(a) := \mathrm{Tr}(\gamma \pi_\phi(a)) =  \mathrm{Tr}(\pi_{\phi,+}(a) - \pi_{\phi,-}(a)).
 $$
 On the basis of $\mathcal{A}^{2n}$ given in the proof of Proposition~\ref{prop.present}~(2), $\zeta_0^{k}\zeta_1^{k_1}\cdots \zeta_n^{k_n}\zeta_1^{*l_1}\cdots \zeta_{n-1}^{*l_{n-1}}\xi^m$, the trace is non-zero (and given by a rational function of $q$ multiplied by $e^{i(k_n-2m)\phi}$) only when simultaneously $k_n$ is odd, $k=0$ and  $k_i=l_i$, for all $i <n$.

\begin{remark}\label{rem.lens}
Lemma~\ref{lemma.ident} can also be used to determine the algebras obtained as  prolongations of the $\cO(\Z_p)$-coaction on $\cO(S^{2n+1}_q)$, for all $p>1$.  $\cO(\Z_p)$ is a Hopf $*$-algebra  generated by $w$ subject to relations $w^p =1$ and $w^* = w^{p-1}$. The coaction of $\cO(\Z_p)$ on $\cO(S^{2n+1}_q)$ is given on generators by $z_i\mapsto z_i\otimes w$. The subalgebra of coaction invariants is known as the {\em quantum lens space} $\cO(L_q(p; \mathbf{1}))$ \cite{HonSzy:len}. The coaction of $\cO(\Z_p)$ on $\cO(S^{2n+1}_q)$ can be equivalently defined as the projection of the $\cO(U(1)$-coaction through the Hopf $*$-algebra map $\pi_p: \cO(U(1)) \to \cO(\Z_p)$,  $v\mapsto w$. Now, the combination of Lemma~\ref{lemma.ident} with the arguments of the proof of Proposition~\ref{prop.present}~(1) yields a $*$-algebra isomorphism
$$
\cO(S^{2n+1}_q)\square_{ \cO(\Z_p)} \cO(U(1)) \simeq \cO(S^{2n+1}_q)\otimes \cO(U(1)).   
$$
\hfill $\Diamond$
\end{remark}

  The Hopf algebra map $f_1\circ f_2: \cO(SU_q(2)) \to \cO(U(1))$, given on generators by $a\mapsto v$, $b\mapsto 0$ (see Section~\ref{sec.sph.proj}) has a unital $ \cO(U(1))$-bicolinear splitting \cite[p.\ 200]{BrzMaj:dif}, \cite[p.\ 257]{HajMaj:pro}
$$
\imath : v^n\longmapsto a^n, \qquad v^{*n}\longmapsto a^{*n}.
$$
This implies that $\cO(SU_q(2))$ is a left principal $\cO(U(1))$-comodule algebra.  By the identification
\begin{eqnarray*}
\cO(S^{m}_q)\square_{\mathcal{O}(\mZt)}\cO(SU_q(2)) &\simeq& \cO(S^{m}_q)\square_{\mathcal{O}(\mZt)}(\cO(U(1)) \square_{\mathcal{O}(U(1))}\cO(SU_q(2))) \\
&\simeq& (\cO(S^{m}_q)\square_{\mathcal{O}(\mZt)}\cO(U(1))) \square_{\mathcal{O}(U(1))}\cO(SU_q(2)),
\end{eqnarray*}
principal comodule algebras in \eqref{hier3} can also be understood as reductions of those in \eqref{hier2} by the Hopf ideal $\ker (f_1\circ f_2) \subset \cO(SU_q(2))$.  In particular, for $m=2,3$, the algebras $\cO(S^m_q)\square_{\mathcal{O}(\mZt)}\mathcal{O}(U(1))$ are {\em non-trivial}  principal $\cO(U(1)$-comodule algebras  obtained as reductions of {\em trivial} principal $\cO(SU_q(2))$-comodule algebras $\cO(S^m_q)\square_{\mathcal{O}(\mZt)}\mathcal{O}(SU_{q}(2))$.

  \section*{Acknowledgments}
  The authors are grateful to Piotr M.\ Hajac for many interesting discussions. BZ would like to thank the Department of Mathematics, Swansea University, for hospitality.
The results presented in this paper form a part of the project {\em Geometry and Symmetry of Quantum Spaces} 
PIRSES-GA-2008-230836. The research of BZ is also partially sponsored by the
Polish government matching grant: 1261/7.PR UE/2009/7.


\begin{thebibliography}{99}

\bibitem{Brz:syn} T.\ Brzezi\'nski, {\em On synthetic interpretation of quantum principal bundles}, AJSE D - Mathematics 35(1D) (2010), 13--27.

\bibitem{bh04} T.\ Brzezi\'nski and P.M.\ Hajac, {\em The Chern-Galois character},
Comptes Rendus Math. (Acad. Sci. Paris Ser. I) vol.338 (2004) 113--116.

\bibitem{BrzMaj:gau}
{T.\ Brzezi\'nski and S.\ Majid,} {\em Quantum group gauge theory on quantum spaces}, Comm.\ Math.\ Phys.\ 157 (1993), 591--638. Erratum 167 (1995), 235.

\bibitem{BrzMaj:dif}
{T.\ Brzezi\'nski and S.\ Majid,} {\em Quantum differentials and the q-monopole revisited}, Acta Appl.\ Math.\ 54 (1998), 185--232. 

\bibitem{Con:non}
A.\ Connes, {\em Noncommutative Geometry,} Academic Press, New York 1994.


\bibitem{DoiTak:cle} Y.\ Doi and M.\ Takeuchi, {\em Cleft comodule
    algebras for a bialgebra},   Comm.\ Algebra { 14} (1986), 801--817.
    
\bibitem{DoiTak:equ} Y.\ Doi and M.\ Takeuchi, {\em Equivalent crossed products for a Hopf algebra},   Comm.\ Algebra { 17} (1989), 3053--3085.



\bibitem{g-r99} R.G\"unther, 
\newblock {\em Crossed products for pointed Hopf algebras},
\newblock Comm. Algebra 27 (1999), 4389--4410. 

\bibitem{h-pm96} P.M.~Hajac {\em Strong connections on quantum principal bundles}
Comm.\ Math.\ Phys.\ 182 (1996), 579--617.

\bibitem{hkmz} P.M.~Hajac, U.~Kr\"{a}hmer, R.~Matthes and B.~Zieli\'nski {\em Piecewise principal comodule algebras}, to appear in Journal of Noncommutative
Geometry.


\bibitem{HajMaj:pro} {P.M.\ Hajac and S.\ Majid,} {\em Projective module description of the $q$-monopole}, Comm.\ Math.\ Phys.\ 206 (1999), 247--264.


\bibitem{qsng}P.M.~Hajac, R.~Matthes, P.M.~So\l{}tan, W.~Szyma\'nski and B.~Zieli\'nski
\newblock {\em Hopf-Galois Extensions and C${}^*$-algebras,}
Chapter in the monograph 
{\em Quantum Symmetry in Noncommutative Geometry.} Ed. P.M.~Hajac. Accepted for publication by
 European Mathematical Society Publishing House. 


 \bibitem{hms03p} P.M.\ Hajac, R.\ Matthes and W.\ Szymanski  {\em Quantum real projective space, disc and spheres},
Algebr.\ Represent.\ Theory 6 (2003), 169--192. 

\bibitem{HawLan:fred} E.\ Hawkins and G.\ Landi, {\em Fredholm modules for quantum Euclidean spheres}, J.\ Geom.\ Phys.\ 49 (2004), 272--293.

\bibitem{HonSzy:sph} J.H.\ Hong and W.\ Szyma\'nski, {\em Quantum spheres and projective spaces as graph algebras}, Comm.\ Math.\  Phys.\ 232 (2002), 157--188.

\bibitem{HonSzy:len} J.H.\ Hong and W.\ Szyma\'nski, {\em Quantum lens spaces and graph algebras}, Pacific J.\ Math.\ 211 (2003), 249--263.


\bibitem{P-P87}P.\ Podle\'s, {\em Quantum spheres}, Lett.~Math.~Phys.~14 (1987), 193--202.

\bibitem{FRT} N.Yu.\  Reshetikhin, L.A.\ Takhtadzhyan and L.D.\ Faddeev, {\em Quantization of Lie groups and Lie algebras}, (Russian) Algebra i Analiz 1 (1989), 178--206; translation in Leningrad Math.\ J.\ 1 (1990), 193--225.
 
 \bibitem{s-p99} P.~Schauenburg 
\newblock {\em Galois objects over generalized Drinfeld doubles, 
with an application to $u_q(\mathfrak{s}\mathfrak{l}_2)$}, 
\newblock J. Algebra  217  (1999), 584--598. 

\bibitem{Sch:pri}  H.-J.\ Schneider, {\em Principal homogeneous spaces for
        arbitrary Hopf algebras}, Israel J.\ Math.\ 72 (1990), 167--195. 
        


\bibitem{Ulb:Gal} K.H.\ Ulbrich, {\em Galoiserweiterungen von nicht-kommutativen Ringen},  Comm.\ Algebra 10 (1982),  655--672.

\bibitem{VakSob:alg} L.L.\ Vaksman and Ya.S.\ Soibel'man, {\em 
Algebra of functions on the quantum group SU(n+1), and odd-dimensional quantum spheres,} (Russian) Algebra i Analiz { 2} (1990), 101--120; translation in 
Leningrad Math.\ J.\ { 2} (1991),  1023--1042 .

\bibitem{w-sl87} S. L.~Woronowicz {\em Compact matrix pseudogroups},  Comm.~Math.~Phys.\ 111 (1987), 613--665.


\end{thebibliography}
\end{document}